\pgfplotsset{compat=1.15}
\newtheorem{theorem}{Theorem}
\newtheorem{lemma}[theorem]{Lemma}
\newtheorem{corollary}[theorem]{Corollary}
\newtheorem{proposition}[theorem]{Proposition}
\newtheorem{definition}[theorem]{Definition}
\newtheorem*{theorem*}{Theorem}
\newtheorem*{lemma*}{Lemma}
\newtheorem*{corollary*}{Corollary}
\newtheorem*{proposition*}{Proposition}
\newtheorem*{remark*}{Remark}
\newtheorem*{definition*}{Definition}
\newtheorem*{conjecture*}{Conjecture}
\newtheorem*{claim*}{Claim}
\DeclareMathOperator{\vol}{\rm vol}
\DeclareMathOperator{\area}{\rm area}
\begin{document}
\title[]{A correspondence between genus one minimal Lawson surfaces of $\mathbb{S}^3(2)$ and area-minimizing unit vector fields on the antipodally punctured unit 2-sphere}
\author{Fabiano Brito$^1$}
\author{Jackeline Conrado$^2$}
\author{David L. Johnson$^3$}
\author{Giovanni Nunes$^4$}

\address{Centro de Matem\'atica, Computa\c c\~{a}o e Cogni\c c\~{a}o, 
Universidade Federal do ABC, Santo Andr\'{e} - SP, 09210-170, Brazil}
\email{brifabiano@gmail.com}

\address{Departamento de Geometria e Representação Gráfica, Instituto de Matem\'{a}tica e Estat\'{i}stica, 
Universidade do Estado do Rio de Janeiro, Rua São Francisco Xavier, 524 - Pavilhão Reitor João Lyra Filho, Rio de Janeiro - RJ, 20550-900, Brazil}
\email{jackeline.conrado@ime.uerj.br}

\address{Department of Mathematics, Lehigh University, 17 University Walk,
Bethlehem, PA 18105, USA.}
\email{david.johnson@lehigh.edu}

\address{Departamento de Matem\'atica e Estat\'istica, Instituto de F\'isica e Matem\'{a}tica,
Universidade Federal de Pelotas, Rua Gomes Carneiro 1, Centro - Pelotas, 96010-610, Brazil}
\email{giovanni.nunes@ufpel.edu.br}

\subjclass[2010]{49Q05, 53A10, 58K45}

\keywords{Minimal surface, area-minimizing vector field, volume functional, Lawson surface, Klein bottle, Clifford torus}

\begin{abstract}
A correspondence is established between a class of minimal immersed surfaces of $\mathbb{S}^3(2)$ and area-minimizing unit vector fields defined on the antipodally punctured unit sphere $\mathbb{S}^2\backslash\{N,S\}$.
As a consequence, we establish a stability relation for Lawson cylinders in $\mathbb{S}^3(2)$.

\end{abstract}

\maketitle

\section{Introduction and main results}

The present work is a continuation of two previous papers [\textcolor{red}{3}] and [\textcolor{red}{2}], in which we studied area minimization of unit vector fields defined on the antipodally punctured unit sphere $\mathbb{S}^2\backslash\{N, S\}$ where $N=(0,0,1)$ and $S=(0,0,-1)$ are referred to as north and south poles of the unit 2-sphere $\mathbb{S}^2$, respectively. We define the area of a unit vector field $V$ on any surface $M$ as the area of the section defined by $V$ in $T^1M$, where $T^1M$ is endowed with the Sasaki metric and $V(M)\subset T^1M$ inherits the induced metric.

As a first step, we found a family of area-minimizing vector fields for each pair of Poincaré indices $\{I_V(N), I_V(S)\}=\{k, 2-k\}$ at the north and south poles of $\mathbb{S}^2$, respectively. Clearly, $I_V(N)+I_V(S)=2$. Precisely, we proved the following:

\begin{theorem}[Brito \textit{et al.}, \textcolor{red}{3}]\label{Thm:BCGN}
Let $V$ be a unit vector field defined on $M=\mathbb{S}^2\backslash\{N, S\}$. If $k = \max\{I_V(N), I_V(S)\}$, $k\neq0$, $k\neq2$ then 
\begin{eqnarray*}
    \area(V) \geq \pi L(\epsilon_k)
\end{eqnarray*} where $L(\epsilon_k)$ denote the length of the ellipse $\dfrac{x^2}{k^2}+\dfrac{y^2}{(k-2)^2}=1$ and $I_V(p)$ stands for the Poincaré index of $V$ around $p$.
   
\end{theorem}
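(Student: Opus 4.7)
The plan is to parametrize unit vector fields on $\mathbb{S}^2\setminus\{N,S\}$ by an angle function, reduce the two-dimensional area integral to a one-dimensional one via Jensen's inequality, and recognize the resulting 1D integral as an ellipse arc length. In spherical coordinates $(\theta,\phi)\in(0,\pi)\times[0,2\pi]$ with orthonormal frame $e_1=\partial_\theta$, $e_2=(\sin\theta)^{-1}\partial_\phi$, I would write $V=\cos\alpha\,e_1+\sin\alpha\,e_2$ for a locally defined angle $\alpha$. A short computation using $\nabla_{e_2}e_1 = \cot\theta\,e_2$ gives
$$\|\nabla V\|^2 = \alpha_\theta^2+\left(\frac{\alpha_\phi+\cos\theta}{\sin\theta}\right)^2,$$
and combining with the standard identity $\area(V)=\int_M\sqrt{1+\|\nabla V\|^2}\,dM$ for unit vector fields on a surface yields
$$\area(V)=\int_0^{2\pi}\!\int_0^\pi \sqrt{\sin^2\theta\,(1+\alpha_\theta^2)+(\alpha_\phi+\cos\theta)^2}\,d\theta\,d\phi.$$

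Next I would discard the non-negative term $\sin^2\theta\cdot\alpha_\theta^2$ pointwise, and for each fixed $\theta$ apply Jensen's inequality to the convex function $x\mapsto\sqrt{\sin^2\theta+x^2}$ in the variable $\phi$. Writing $n$ for the integer winding number $n=\frac{1}{2\pi}\int_0^{2\pi}\alpha_\phi\,d\phi$, the $\phi$-average of $\alpha_\phi+\cos\theta$ is $n+\cos\theta$, so
$$\area(V)\geq 2\pi\int_0^\pi\sqrt{\sin^2\theta + (n+\cos\theta)^2}\,d\theta = 2\pi\int_0^\pi\sqrt{1+n^2+2n\cos\theta}\,d\theta.$$
The Poincar\'e index hypothesis enters through the identities $I_V(N)=1+n$ and $I_V(S)=1-n$, which sum to $\chi(\mathbb{S}^2)=2$ as required. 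With $k=\max\{I_V(N),I_V(S)\}$ these force $|n|=k-1$; the sign of $n$ can be absorbed by the substitution $\theta\mapsto\pi-\theta$, so the bound above holds with $k-1$ in place of $n$.

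The proof concludes via the algebraic identity
$$k^2\cos^2 t+(k-2)^2\sin^2 t = 1+(k-1)^2+2(k-1)\cos(2t),$$
which under the change of variable $\theta=2t$ and the period-$\pi$ symmetry of the integrand identifies $2\pi\int_0^\pi\sqrt{1+(k-1)^2+2(k-1)\cos\theta}\,d\theta$ with $\pi L(\epsilon_k)$. The main technical obstacle I anticipate is the careful verification of the index relations $I_V(N)=1+n$ and $I_V(S)=1-n$: one must track the orientation of the tangent plane at each pole relative to the spherical coordinates (the positively oriented small loop at $S$ corresponds to $\phi$ decreasing) and account for the coordinate singularity of the frame $\{e_1,e_2\}$ at both poles. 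Once these are in place, the remaining estimates are routine.
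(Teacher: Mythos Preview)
Your argument is correct and follows essentially the same route as the paper (which cites [3] for the full proof but sketches the key inequalities in Section~2.1): drop the meridional derivative term, then exploit convexity along each parallel to replace $d\theta(e_1)$ by its average, which is determined by the winding number and hence by the Poincar\'e indices. The only cosmetic difference is that the paper linearizes via the pointwise tangent-line inequality $\sqrt{a^{2}+b^{2}}\ge |a\cos\varphi+b\sin\varphi|$ with a latitude-dependent optimal $\varphi$, whereas you invoke Jensen directly---these are equivalent, and your identification of the resulting one-dimensional integral with $\pi L(\epsilon_{k})$ via the substitution $\theta=2t$ matches the paper's computation.
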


It is also shown that these lower bounds are achieved by unit vector fields that make constant angles with meridians (just along them) and vary constantly in angle along each parallel, see Figure \ref{Fig:Area_Minimizing}. 
A precise definition of these vector fields on $\mathbb{S}^2 \backslash \{N, S\}$ is provided in the next section.

Naturally, the number of loops along each parallel depends on the Poincaré indices at the singularities $N$ and $S$. Let us denote by $V_k$ the area-minimizing unit vector field relative to $k=\max\{I_N(V), I_S(V)\}$ as defined in Theorem \ref{Thm:BCGN}.

In a second step, we geometrically describe the area-minimizing vector fields $V_k$ when $I_N(V)$ and $I_S(V)$ are even integers. If $k$ is even integer then the set $\overline{V_k \big(\mathbb{S}^2\backslash\{N,S\}\big)}$ corresponds to the image of a minimal Klein bottle in $T^1\mathbb{S}^2$. Specifically, we proved the following: 

\begin{theorem}[Brito \textit{et al.}, \textcolor{red}{2}] \label{Thm:GKlein}
Let $V_k$ be an area-minimizing unit vector field on $\mathbb{S}^2\backslash\{N,S\}$. If the Poincaré index around the singularity $N$ (or $S$) is $k \in 2\mathbb{Z}\backslash\{0, 2\}$, then the topological closure of $V_k \big(\mathbb{S}^2\backslash\{N,S\}\big)$ is a minimally immersed Klein bottle in $T^1\mathbb{S}^2$.
\end{theorem}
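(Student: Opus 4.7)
The plan is to realize the closure of $V_k(\mathbb{S}^2\setminus\{N,S\})$ as the image of a minimal immersion of a Klein bottle into $T^1\mathbb{S}^2$, by lifting to $\mathbb{S}^3(2)$ via the isometric identification $T^1\mathbb{S}^2\cong\mathbb{S}^3(2)/\{\pm\mathrm{Id}\}$ (Sasaki metric on the left, round metric on the right) and analyzing the antipodal $\mathbb{Z}/2$-action on the lift.

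First, I would write $V_k$ explicitly. In spherical coordinates $(\theta,\phi)\in(0,\pi)\times S^1$ on $\mathbb{S}^2\setminus\{N,S\}$, the description of $V_k$ in Theorem \ref{Thm:BCGN} (constant angle with each meridian, constant angular rate along parallels) combined with the index condition $I_V(N)=k$ yields, up to an overall rotation about the polar axis,
$$V_k(\theta,\phi)=\cos((k-1)\phi)\,e_\theta+\sin((k-1)\phi)\,e_\phi.$$
Taking $\theta\to 0$ gives $V_k(\theta,\phi)\to(\cos(k\phi),\sin(k\phi),0)\in T^1_N\mathbb{S}^2$, and an analogous expression produces a limiting circle $C_S\subset T^1_S\mathbb{S}^2$ as $\theta\to\pi$. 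Hence, set-theoretically, the closure is the embedded cylinder $V_k(\mathbb{S}^2\setminus\{N,S\})$ together with these two circles $C_N$ and $C_S$, accumulated via $k$-fold and $|k-2|$-fold windings, respectively.

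Next, I would lift the closure to $\mathbb{S}^3(2)$ through the double cover $\pi:\mathbb{S}^3(2)\to T^1\mathbb{S}^2$. Interpreting a unit tangent vector on $\mathbb{S}^2$ as a frame in $SO(3)$ and choosing a smooth lift to $\mathrm{Spin}(3)=\mathbb{S}^3$ over the simply connected piece $\mathbb{S}^2\setminus\{N,S\}$, an explicit construction produces a smoothly parametrized closed immersed surface $\widetilde\Sigma\subset\mathbb{S}^3(2)$ with $\pi(\widetilde\Sigma)$ equal to the closure; a topological count identifies $\widetilde\Sigma$ as a torus (the orientable double cover of the desired Klein bottle). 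Minimality of $\widetilde\Sigma$ comes from the area-minimizing property of $V_k$: since the graph of $V_k$ is a hypersurface in the three-manifold $T^1\mathbb{S}^2$, variations as a section exhaust all normal variations of the immersion, so criticality of the section area gives vanishing mean curvature; this minimality lifts to $\widetilde\Sigma$ via the local isometry $\pi$.

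Finally, I would verify that the antipodal involution $-\mathrm{Id}$ restricted to $\widetilde\Sigma$ is fixed-point-free precisely when $k$ is even. In the chosen parametrization, $-\mathrm{Id}$ corresponds to a half-period shift, and a parity argument (tracking the polar winding numbers $k$ and $|k-2|$ together with the induced action on the $\phi$-coordinate) shows that this shift hits no orbit of $\widetilde\Sigma$ exactly when $k$ is even. Hence for $k\in 2\mathbb{Z}\setminus\{0,2\}$, the quotient $\widetilde\Sigma/\{\pm\mathrm{Id}\}$ is a Klein bottle, minimally immersed in $T^1\mathbb{S}^2$, and it coincides with the closure by construction. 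The main obstacle is precisely this last step: building the explicit $\mathrm{Spin}(3)$-lift and carefully proving that the antipodal action is free exactly in the even-$k$ case requires delicate bookkeeping of the periods and the asymmetry between the two pole windings.
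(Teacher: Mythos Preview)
Your approach departs from the argument recorded in the paper and contains concrete errors. The paper does not prove Theorem~\ref{Thm:GKlein} in full; it is quoted from [\textcolor{red}{2}], and the one-paragraph sketch in Subsection~\ref{GK} says the closure is obtained by gluing two M\"obius strips along their common boundary circle (the image of the equator), with the even-ness of $k$ being precisely what makes this collage possible. No lift to $\mathbb{S}^3(2)$ is invoked for that argument; the surface is assembled directly inside $T^1\mathbb{S}^2$.

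Regarding your proposal: first, $\mathbb{S}^2\setminus\{N,S\}$ is \emph{not} simply connected (it is a cylinder), so the existence of the lift you claim requires a $\pi_1$-computation rather than the justification you give. Second, and more seriously, your picture that $\widetilde\Sigma$ is a torus on which the antipodal map acts freely exactly when $k$ is even is not what happens. For $k=2n+2$ the full preimage $\pi^{-1}\bigl(\overline{V_k}\bigr)$ is the Lawson set $\tau_{n,n+1}$ (one checks $\varphi_{n,n+1}(x,y+\pi)=-\varphi_{n,n+1}(x,y)$, so it is antipodally invariant), and Lawson's Theorem~\ref{Thm:Propriedades_Lawsons}(a) already identifies $\tau_{n,n+1}$ as an immersed Klein bottle in $\mathbb{S}^3(2)$; the non-orientability is therefore present \emph{upstairs}, not created by a free $\mathbb{Z}/2$-quotient of a torus. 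The parity of $k$ enters through a different mechanism: for odd $k$ the relevant Lawson surface is the torus $\tau_{2n+1,2n+3}$, whose two cylindrical halves project under the Euler map to $\overline{V_k}$ and $\overline{-V_k}$ separately (Proposition~\ref{Prop:Dois_Vetores-impares}), and neither half closes up to a Klein bottle. If you want a route via $\mathbb{S}^3(2)$, the clean one is the paper's own: push the Lawson Klein bottle $\tau_{n,n+1}$ \emph{down} through $\mathcal{E}$ and identify the image with $\overline{V_{2n+2}}$ (Propositions~\ref{Prop:UnicoVetor} and~\ref{Prop:ruled_Par}), rather than trying to lift $V_k$ up.
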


In this paper, we establish a correspondence between certain classes of genus $1$ immersed minimal surfaces in $\mathbb{S}^3$ introduced by Lawson in his classical paper [\textcolor{red}{9}] and area-minimizing unit vector fields in $T^1\mathbb{S}^2$ defined in [\textcolor{red}{3}].

It is worth noting that the geometry of the unit tangent bundle of $\mathbb{S}^2$ with the Sasaki metric is well known $(T^1\mathbb{S}^2, g^{Sas})$.  As shown by W. Klingenberg and T. Sasaki in [\textcolor{red}{8}], it is isometric to the real projective space $\left(\mathbb{R}{\rm{P}}^3(2), 4\overline{g}\right)$ which is obtained as the quotient of the sphere $\big(\mathbb{S}^3(2), g\big)$ of radius $2$, where $\overline{g}$ is the quotient metric. 

Let $\mathbf{SO}(3)$ denote the special orthogonal group equipped with the usual bi-invariant metric $\frac{1}{2}\left\langle\cdot ,\cdot \right\rangle$ given by
$\left\langle A,B \right\rangle = tr(A^{t}B)$, where $A,B \in \mathfrak{so}(3)$. 
 It is known that ($\mathbb{S}^3(2),g)$ is locally isometric to $\left(\mathbf{SO}(3), \frac{1}{2}\left\langle\cdot ,\cdot \right\rangle\right)$ by the Euler parametric transformation $\mathcal{E}: \mathbb{S}^3(2) \rightarrow \mathbf{SO}(3)$ given by quaternionic conjugation (see [\textcolor{red}{8}]). 
This map extends to $(T^1\mathbb{S}^2, g^{Sas})$ which is isometric to $\left(\mathbf{SO}(3), \frac{1}{2}\left\langle\cdot ,\cdot \right\rangle\right)$. 

This allows us to prove the following:

\begin{theorem}\label{Thm:Main1}

There are classes of compact minimal surface of Euler characteristic zero in $\mathbb{S}^3(2)$ which are locally isometric to the area-minimizing unit vector field $V_k$ on $\mathbb{S}^2\backslash\{N,S\}$.
\end{theorem}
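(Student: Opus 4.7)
The plan is to bootstrap from Theorem \ref{Thm:GKlein} by transporting the minimal Klein bottle it produces through the chain of isometries
$(T^1\mathbb{S}^2, g^{Sas}) \cong \bigl(\mathbf{SO}(3), \tfrac{1}{2}\langle\cdot,\cdot\rangle\bigr) \cong (\mathbb{R}\mathrm{P}^3(2), 4\overline{g})$
already recalled in the introduction, and then lifting the resulting surface through the double Riemannian covering $\pi:\mathbb{S}^3(2)\to\mathbb{R}\mathrm{P}^3(2)$. Because every map in this chain is a local isometry, minimality will be preserved at every stage, and the compact surface obtained at the end will automatically be locally isometric to the original image $\overline{V_k(\mathbb{S}^2\backslash\{N,S\})}$.

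Concretely, first I would fix $k\in 2\mathbb{Z}\backslash\{0,2\}$ and let $\mathcal{K}_k := \overline{V_k(\mathbb{S}^2\backslash\{N,S\})}\subset T^1\mathbb{S}^2$, which is a minimally immersed Klein bottle by Theorem \ref{Thm:GKlein}. Next I would push $\mathcal{K}_k$ forward through the Klingenberg--Sasaki isometry to obtain a minimally immersed Klein bottle $\widetilde{\mathcal{K}}_k \subset \mathbb{R}\mathrm{P}^3(2)$, and then set
\[
\Sigma_k := \pi^{-1}(\widetilde{\mathcal{K}}_k)\subset \mathbb{S}^3(2).
\]
Since $\pi$ is a local isometry, $\Sigma_k$ is a compact minimal immersed surface; the multiplicativity of the Euler characteristic under the double cover gives $\chi(\Sigma_k)=2\chi(\widetilde{\mathcal{K}}_k)=0$ (regardless of whether $\Sigma_k$ is connected, yielding a torus, or splits into two Klein bottles, according to whether the map $\pi_1(\widetilde{\mathcal{K}}_k)\to \pi_1(\mathbb{R}\mathrm{P}^3)$ is surjective or trivial). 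Finally, by composing the covering map $\pi$ with the two isometries above, one obtains a local isometry from each component of $\Sigma_k$ onto $\mathcal{K}_k$, which in particular restricts to a local isometry onto the open dense subset $V_k(\mathbb{S}^2\backslash\{N,S\})$ endowed with its Sasaki-induced metric; this is precisely the claimed local isometry.

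The main obstacle I anticipate is not the local-isometry bookkeeping, which is essentially formal, but the identification of the lifted surfaces $\Sigma_k$ with concrete compact minimal immersions in $\mathbb{S}^3(2)$ and the verification that they really are of Euler characteristic zero as embedded/immersed objects rather than as abstract double covers. In particular, one must check that the gluing of the two antipodal preimages of $\widetilde{\mathcal{K}}_k$ along the images of the punctured poles behaves smoothly in $\mathbb{S}^3(2)$, which requires a careful analysis of the limit of $V_k$ near $N$ and $S$ under the Euler parametric transformation $\mathcal{E}$. Once this is handled, writing down $\Sigma_k$ explicitly via $\mathcal{E}$ should yield the desired class of minimal surfaces and, in the cases corresponding to even Poincaré indices, recover the Lawson cylinders alluded to in the abstract.
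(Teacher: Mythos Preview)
Your argument is sound for the bare existence statement: starting from Theorem~\ref{Thm:GKlein}, transporting through the Klingenberg--Sasaki isometry, and pulling back along the double cover $\pi:\mathbb{S}^3(2)\to\mathbb{R}\mathrm{P}^3(2)$ does produce, for each even $k\notin\{0,2\}$, a compact minimal immersed surface of Euler characteristic zero in $\mathbb{S}^3(2)$ that is locally isometric to $V_k(\mathbb{S}^2\backslash\{N,S\})$. (One technical remark: since $\mathcal{K}_k$ is only immersed, $\Sigma_k$ should really be defined as the pullback of the immersion along $\pi$, not as the set-theoretic preimage; your Euler-characteristic discussion shows you have the correct object in mind.)

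However, the paper proceeds in the opposite direction and this buys considerably more. Rather than lifting $\mathcal{K}_k$ abstractly, the paper starts from the explicit Lawson immersions $\tau_{n,n+1}$ and $\tau_{2n+1,2n+3}$ in $\mathbb{S}^3(2)$, pushes them \emph{down} through the Euler transformation $\mathcal{E}$ to $\mathbf{SO}(3)\cong T^1\mathbb{S}^2$, and computes directly the resulting pair $\bigl(p(x,y),V(p)\bigr)$. By computing the angle of $V(p)$ against the parallel/meridian frame one finds $d\theta(e_1)=(k-1)/\cos\alpha$ and $d\theta(e_2)=0$ with $k=2n+2$ (respectively $k=2n+3$), which identifies $V$ with the area-minimizing field $V_k$ of Definition~\ref{def:Vks}. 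This forward computation (Propositions~\ref{Prop:ruled_Par} and \ref{Prop:ruled_Impar}) simultaneously proves Theorem~\ref{Thm:Main1} and supplies the explicit identification with Lawson surfaces needed for Theorem~\ref{Thm:Main2} and Corollary~\ref{Cor:Lawson_Cylinders}.

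Your route, by contrast, relies on Theorem~\ref{Thm:GKlein} as a black box and therefore only treats even $k$; it produces $\Sigma_k$ abstractly without identifying it as any Lawson $\tau_{n,m}$, so it cannot feed into Theorem~\ref{Thm:Main2}. Note also a small mix-up in your final paragraph: the Lawson \emph{cylinders} $C_{rs}^{\pm}\subset\tau_{2n+1,2n+3}$ correspond to \emph{odd} Poincar\'e indices, whereas the even-index surfaces you construct lift the Klein bottles $\tau_{n,n+1}$.
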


\begin{theorem} \label{Thm:Main2}
    The classes of compact minimal surface of Euler characteristic zero in $\mathbb{S}^3(2)$ are exactly the Lawson surfaces $\tau_{n,n+1}$ which are minimally immersed Klein bottles and the cylindrical components of the Lawson surfaces $\tau_{2n+1,2n+3}$. 
\end{theorem}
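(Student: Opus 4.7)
The plan is to lift, via the Euler transformation $\mathcal{E}$ and the Klingenberg--Sasaki isometry, the minimal surfaces in $\mathbb{S}^3(2)$ furnished by Theorem~\ref{Thm:Main1} to explicit parametrizations on $\mathbb{S}^3(2)$, and then match them with Lawson's explicit two-parameter family [\textcolor{red}{9}]. To start, Theorems~\ref{Thm:BCGN} and~\ref{Thm:GKlein} fix the local shape of $V_k$: in spherical coordinates $(\theta,\phi)$ on $\mathbb{S}^2\setminus\{N,S\}$, the field $V_k$ makes a specific angle with the meridian that depends only on $\theta$ while rotating linearly in $\phi$ at a rate tied to the Poincar\'e index $k$. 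This gives a doubly-periodic immersion whose image, after the Sasaki identification with $\mathbb{RP}^3(2)$ and lifting through the $2:1$ cover $\mathcal{E}:\mathbb{S}^3(2)\to\mathbf{SO}(3)$, is the candidate minimal surface in $\mathbb{S}^3(2)$.

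Next I would compute the four Euclidean coordinates of this lifted immersion in $\mathbb{R}^4$. Writing $\mathcal{E}$ in quaternionic form, each coordinate becomes a product of a function of $\theta$ alone with a trigonometric function of a linear combination of $\phi$ and the lifted fiber angle. This is exactly the shape of Lawson's parametrization of $\tau_{m,k}$, with the two integer frequencies governed by the rotation rate of $V_k$ along parallels and therefore differing by one. A suitable change of variables should then identify the parametrization with $\tau_{n,n+1}$ when $k$ is even and with $\tau_{2n+1,2n+3}$ when $k$ is odd.

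The topology of the match is read off from a parity argument. For even $k$, Theorem~\ref{Thm:GKlein} guarantees that the closure of $V_k(M)$ is a Klein bottle in $T^1\mathbb{S}^2$; its orientation double cover in $\mathbb{S}^3(2)$ is a torus, and $\gcd(n+1,n+2)=1$ confirms that $\tau_{n,n+1}$ is a single embedded torus, the correct candidate. For odd $k$ one has $\gcd(2n+2,2n+4)=2$, so $\tau_{2n+1,2n+3}$ splits into two congruent connected components, each a flat topological cylinder; the lift of $V_k$ lands in exactly one of them, yielding the ``cylindrical components'' in the statement. Exhaustiveness follows from the fact that any compact minimal surface of Euler characteristic zero arising through the correspondence of Theorem~\ref{Thm:Main1} must come from some $V_k$, and the parity of $k$ selects which of the two Lawson families appears.

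The main obstacle I anticipate lies in the second step: tracking how the Sasaki metric on $T^1\mathbb{S}^2$, the Klingenberg--Sasaki isometry, and the quaternionic Euler transformation combine to convert the explicit rotation rule of $V_k$ into Lawson's two-frequency trigonometric formula with the correct pair of integer frequencies. A closely related technical subtlety is the choice of branch of the Euler double cover in the odd-$k$ case, which must be made so that the image is a single connected cylindrical component rather than a disconnected pair.
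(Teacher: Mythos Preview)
Your overall strategy is reversed relative to the paper but is in principle workable: the paper starts from the Lawson surfaces $\tau_{n,n+1}$ and $\tau_{2n+1,2n+3}$ in $\mathbb{S}^3(2)$, pushes them forward through the Euler map $\mathcal{E}$ to explicit $\mathbf{SO}(3)$-matrices, reads off the induced base point $p(x,y)\in\mathbb{S}^2$ and vector $V(p)$, and then computes the angle $\theta(p)$ between $V(p)$ and the meridian/parallel frame to verify $d\theta(e_1)=(k-1)/\cos\alpha$, $d\theta(e_2)=0$. You instead propose to start from $V_k$ and lift. Either direction can be made to work, and your identification of the main technical difficulty (tracking the quaternionic formulas to land exactly on Lawson's two-frequency parametrization) is accurate.

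However, your parity and topology discussion contains genuine errors that would derail the argument. First, $\tau_{n,n+1}$ in $\mathbb{S}^3(2)$ is \emph{not} a torus: by Lawson's criterion $\tau_{m,n}$ is nonorientable precisely when $2\mid mn$, and since one of $n,n+1$ is even, $\tau_{n,n+1}$ is an immersed Klein bottle (and not embedded; only $\tau_{1,1}$ is). The $2{:}1$ cover $\mathbb{S}^3(2)\to T^1\mathbb{S}^2$ restricted to $\tau_{n,n+1}$ is a $2{:}1$ map of a Klein bottle onto a Klein bottle, not the orientation double cover. Second, $\tau_{2n+1,2n+3}$ is a \emph{connected} immersed torus since $\gcd(2n+1,2n+3)=1$; it does not split into two components. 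Your computation $\gcd(2n+2,2n+4)=2$ refers to the wrong pair of integers. The ``cylindrical components'' $C_{rs}^{\pm}$ in the statement are not connected components of the surface; they are the two halves obtained by cutting the torus along two great circles (the images of $y=0$ and $y=\pi/2$), and the point is that under $\mathcal{E}$ these halves correspond to the two distinct sections $V_k$ and $-V_k$. The paper establishes this by showing (its Proposition~\ref{Prop:Dois_Vetores-impares}) that two parameter pairs in $G$ giving the same base point $p$ yield antipodal vectors $\pm V(p)$, whereas in the $\tau_{n,n+1}$ case (Lemma~\ref{Lemma:sucessivos}) they yield the same vector. Getting this multiplicity analysis right is what actually distinguishes the even and odd cases, not a $\gcd$ computation on the Lawson indices.
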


For readers comfort, we explicitly reproduce the equations defining the Lawson surfaces $\tau$ in $\mathbb{S}^3$ and $\mathbb{S}^3(2)$.


As a consequence of Theorems \ref{Thm:Main1} and \ref{Thm:Main2}, we obtain the following corollary, which establishes a stability relation for Lawson cylinders.


\begin{corollary}\label{Cor:Lawson_Cylinders} If $C_{rs}^+$ and $C_{rs}^-$ are the cylindrical components of $\tau_{r,s}$ bounding two maximally-distant great circles of \,$\mathbb{S}^3(2)$, then $C_{rs}^+$ and $C_{rs}^-$ are locally area-minimizing among all immersed cylinders $C$ belonging to the family $\mathcal{F}$ of immersed cylinders in $\mathbb{S}^3(2)$ satisfying:
\begin{itemize}
    \item [(1)] The image of the interior of the cylinder $C$, $C\backslash\left\{ \partial C\right\} $, by the Euler parametric transformation is a unit vector field in $\mathbb{S}^2\backslash\{N,S\}$.
    \item [(2)] The boundary of $C$, $\partial C$, is the union of the two maximally-distant great circles of $\mathbb{S}^3(2)$.
\end{itemize}
\end{corollary}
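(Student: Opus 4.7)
The plan is to lift the problem from $\mathbb{S}^3(2)$ to $T^1\mathbb{S}^2$ via the Euler parametric transformation $\mathcal{E}$ and then invoke Theorem \ref{Thm:BCGN}. For any cylinder $C \in \mathcal{F}$, hypothesis $(1)$ produces a unit vector field $V_C := \mathcal{E}(C \setminus \partial C)$ on $\mathbb{S}^2 \setminus \{N, S\}$, while hypothesis $(2)$ guarantees that $\mathcal{E}$ sends the two boundary great circles of $C$ to the punctures $N$ and $S$. Because $\mathcal{E}$ is a local isometry onto $(\mathbf{SO}(3), \tfrac{1}{2}\langle \cdot, \cdot \rangle) \cong (T^1\mathbb{S}^2, g^{\mathrm{Sas}})$, one has $\area(C) = \area(V_C)$.

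By Theorems \ref{Thm:Main1} and \ref{Thm:Main2}, each cylindrical component $C_{rs}^+$ and $C_{rs}^-$ corresponds under $\mathcal{E}$ to an area-minimizing vector field $V_k$ on $\mathbb{S}^2 \setminus \{N, S\}$ provided by Theorem \ref{Thm:BCGN}, for an integer $k=k(r,s)$ determined by $(r,s) = (2n+1, 2n+3)$. Consequently $\area(C_{rs}^{\pm}) = \area(V_k) = \pi L(\epsilon_k)$.

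The next step is a stability argument for the Poincar\'e indices: for any $C \in \mathcal{F}$ sufficiently close to $C_{rs}^{\pm}$, the indices $I_{V_C}(N)$ and $I_{V_C}(S)$ must coincide with those of $V_k$. Indeed, the Poincar\'e index is a topological invariant which is locally constant under $C^0$-small perturbations of a unit vector field on an annular neighborhood of each puncture; together with condition $(2)$, which pins down the boundary behavior near $N$ and $S$, this forces $\max\{I_{V_C}(N), I_{V_C}(S)\} = k$. Applying Theorem \ref{Thm:BCGN} to $V_C$ then gives $\area(V_C) \geq \pi L(\epsilon_k) = \area(V_k)$, and transferring back through the local isometry $\mathcal{E}$ yields $\area(C) \geq \area(C_{rs}^{\pm})$, which is exactly the claimed local area-minimization.

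The main obstacle is this third step: making precise the topology on $\mathcal{F}$ in which the word \emph{locally} is to be interpreted, and rigorously verifying the stability of the Poincar\'e indices at the punctures, where $V_C$ fails to be defined on $\mathbb{S}^2$ but the cylinder $C$ still meets its boundary great circles smoothly in $\mathbb{S}^3(2)$. Once the correct topology (for instance $C^0$-closeness outside a small tubular neighborhood of $\partial C$, together with a matching collar condition near $\partial C$) is identified, the remainder of the argument is a direct application of the results already stated in the introduction.
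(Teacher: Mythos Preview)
Your proposal is correct and follows essentially the same route the paper intends. The paper does not give a separate written proof of the corollary; it simply records it as a consequence of Theorems~\ref{Thm:Main1} and~\ref{Thm:Main2}, with the key correspondence $C_{rs}^{\pm}\leftrightarrow \pm V_k$ established at the end of the proof of Theorem~\ref{Thm:Main2}. Your argument makes explicit what the paper leaves implicit: transport the area comparison through the local isometry $\mathcal{E}$ and invoke Theorem~\ref{Thm:BCGN} on the resulting vector field. You are also right to flag that the word \emph{locally} is doing real work---it is precisely the stability of the Poincar\'e indices under small perturbations that restricts nearby competitors to the same index class~$k$, and the paper does not spell out the topology on $\mathcal{F}$ any more than you do.
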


In the following, we provide a visualization of the correspondence obtained between genus one minimal Lawson's surfaces and area-minimizing vector fields $V_k$ on \( \mathbb{S}^2 \setminus \{N, S\}\). 

The sets $C_{rs}^+$ and $C_{rs}^-$ are the cylindrical components of $\tau_{r,s}$, meaning that $C_{rs}^+ \cup C_{rs}^- = \tau_{r,s}$, where $r=2n+1$ and $s=2n+3$. Through the Euler transformation $\mathcal{E}: \mathbb{S}^3(2) \rightarrow \mathbf{SO}(3)$, there is a correspondence between $C_{rs}^+$ and $C_{rs}^-$ and the area-minimizing vector fields $V_k$ and $-V_k$, respectively, where $k$ is odd. In the case, $r=1$ and $s=3$, the Figures~\ref{Fig:CilindrosC+} and~\ref{Fig:CilindrosC-} illustrate this correspondence. Decompose \( C_{13}^+ \) into two pieces, each corresponding to the restriction of the area-minimizing vector field to the northern and southern hemispheres, respectively. The same decomposition is performed for \( C_{13}^- \). Consider \( C^+_{13}(N) \) and \( C^+_{13}(S) \), as well as \( C^-_{13}(N) \) and \( C^-_{13}(S) \), as the surfaces in \( \mathbb{R}^3 \) representing the pieces of \( C^+_{13} \) and \( C^-_{13} \) mentioned above, respectively.

\begin{figure}[H]
		\centering
		\includegraphics[height=15cm]{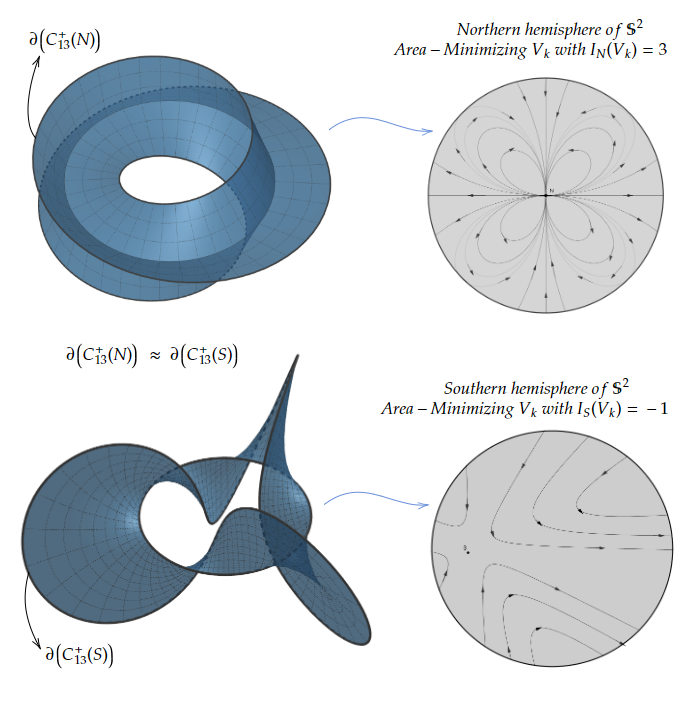}\\
        
		\caption{The correspondence between pieces of Lawson surfaces $\tau_{13}$ and area-minimizing unit vector fields $V_k$ on $\mathbb{S}^2\backslash\{N,S\}$.}
		\label{Fig:CilindrosC+}
	\end{figure}

\begin{figure}[H]
		\centering
		\includegraphics[height=19cm]{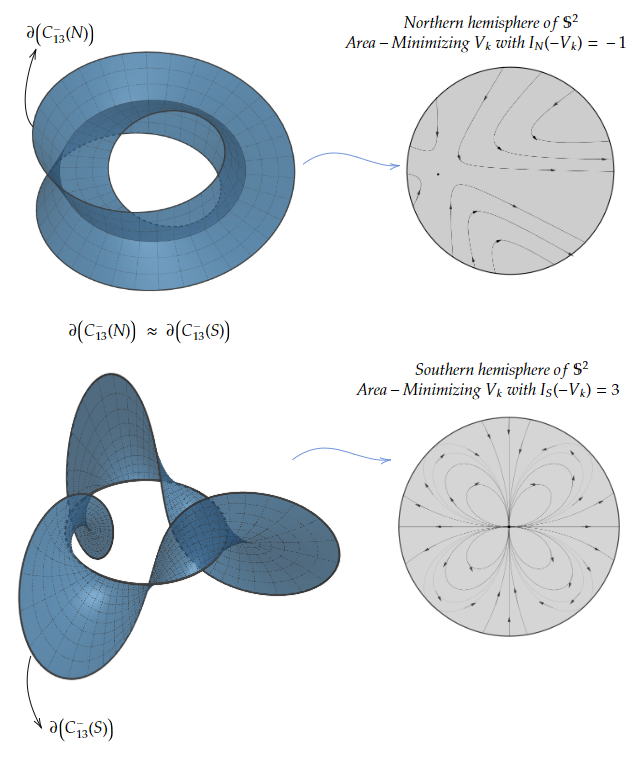}
		\caption{The correspondence between pieces of Lawson surfaces $\tau_{13}$ and area-minimizing unit vector fields $-V_k$ on $\mathbb{S}^2\backslash\{N,S\}$.}
		\label{Fig:CilindrosC-}
	\end{figure}

The visualization of the correspondence between the area-minimizing vector field $V_k$, and the Lawson surfaces $\tau_{n,n+1}$ is given by Figure \ref{Fig:FaixasMoebius}, for specific case $n=1$. Let \( F(N) \) and \( F(S) \) denote the Moebius strips corresponding to the restriction of the area-minimizing vector field \( V_k \) to the northern and southern hemispheres, respectively.

\begin{figure}[H]
		\centering
		\includegraphics[height=16cm]{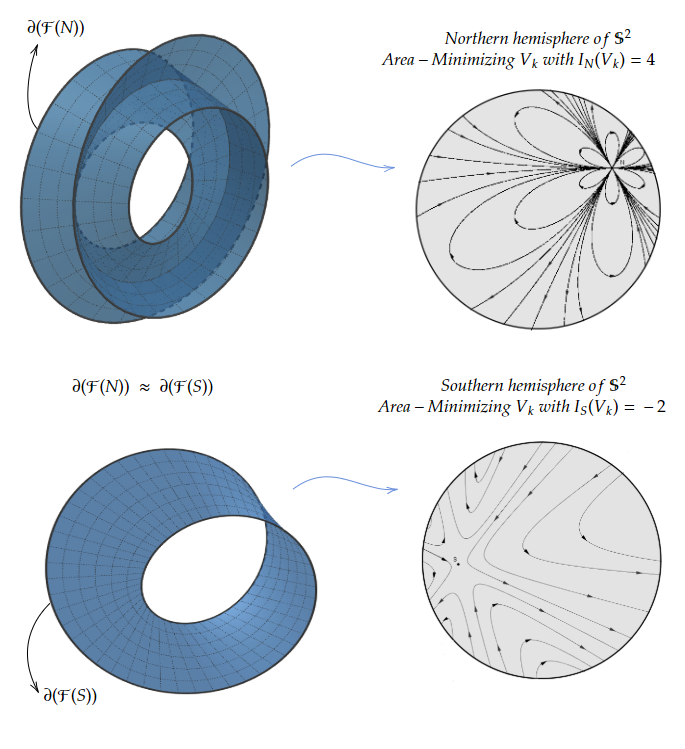}
		\caption{The correspondence between pieces of Lawson surfaces $\tau_{12}$ and area-minimizing unit vector fields $V_k$ on $\mathbb{S}^2\backslash\{N,S\}$.}
		\label{Fig:FaixasMoebius}
	\end{figure}

\section{Differential Geometric Preliminaries}

\subsection{Area-minimizing unit vector fields on antipodally punctured unit 2-sphere}

Let $M$ be a closed oriented Riemannian manifold and $V$ a unit vector field on $M$. Consider the unit tangent bundle $T^1M$ equipped with the Sasaki metric. The \textit{\textbf{volume of a unit vector field $V$}} is defined (see [\textcolor{red}{6}]) as the volume of the submanifold $V(M)$, the image of the immersion $V: M \rightarrow T^1M$, 
\begin{equation*}\label{Eq:volume}
\vol(V):=\vol(V(M)).
\end{equation*}

\noindent Let $\{e_1,\cdots ,e_n\}$ be an orthonormal local frame on $M$ and denote by $\nu_M$ the volume form of $M$ written with respect to it. The formula for the volume of the unit vector field $V$ is given by
\begin{eqnarray}\label{Eq:volumegeral}\nonumber
\vol( V) & = &\int_{M}{\sqrt{\det(\rm{I}+(\nabla V)(\nabla V)^*)}\nu_M}\\
& = & \int_{M} \bigg(1 + \sum_{j}^{n}||\nabla_{e_j}V||^2 + \sum_{j_1 < j_2}||\nabla_{e_{j_1}}V \wedge \nabla_{e_{j_2}}V||^2 + \cdots \\\nonumber
& &+ \cdots + \sum_{j_1 < \cdots < j_{n-1}}||\nabla_{e_{j_1}}V \wedge \cdots\wedge\nabla_{e_{j_{n-1}}}V||^2\bigg)^{\frac{1}{2}},
\end{eqnarray}
where $\rm{I}$ is the identity, and $\nabla V$ is considered as an endomorphism of the tangent space with adjoint operator $(\nabla V)^*$, see [\textcolor{red}{7}]. If a unit vector field is parallel, i.e. $\nabla V = 0$, equation \eqref{Eq:volumegeral} implies there is a trivial minimum, $\vol(V)=\vol(M).$


\noindent If $M$ is a surface and  $V$ a unit vector field on $M$. The \textit{\textbf{area of a unit vector field $V$}} on $M$ is defined as the area of the surface $V(M)$ in $T^1M$, and it coincides with the volume of a unit vector field $V$
\begin{align*}
    \area(V) := \area(V(M)) = \vol(V).
\end{align*}

\noindent It is well known that there is no globally defined vector field on $\mathbb{S}^2$, so the infimum of the area is achieved only when there is at least one singularity. The Pontryagin vector field is a unit vector field with one singularity obtained by parallel translating a given vector along any great circle passing through a given point. In [\textcolor{red}{1}], Borrelli and Gil-Medrano showed that Pontryagin vector fields of the unit 2-sphere with one singularity are area-minimizing. 

\begin{figure}[H]
		\centering
		\includegraphics[height=6cm]{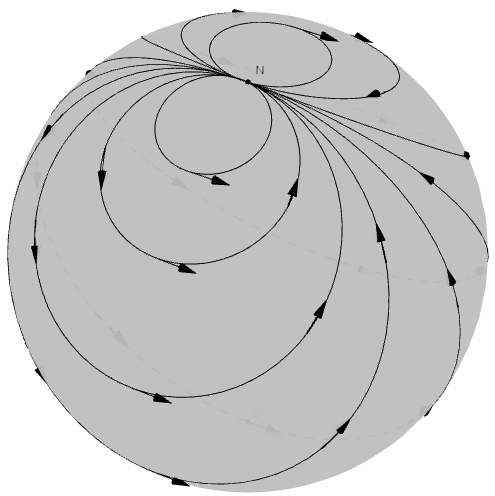}
		\caption{The Pontryagin vector field on $\mathbb{S}^2\backslash\{N\}$ with Poincaré index $2$ at the singularity $N$}
		\label{Fig:V_2}
	\end{figure}

As mentioned in Theorem \ref{Thm:BCGN}, we established sharp lower bounds for the total area of unit vector fields on the antipodally punctured unit 2-sphere, and these values depend on the indices of the singularities. Furthermore, we exhibited a family of the unit vector fields $V_k$ on  $\mathbb{S}^2\backslash\{N,S\}$ that are area-minimizing within each index class $k$. We shall present their precise definition.

Let $\mathbb{S}^{2}\backslash\{N,S\}$ be the Euclidean sphere with two antipodal points, $N$ and $S$, removed. Denote by $g$ the usual metric of $\mathbb{S}^2$ induced from $\mathbb{R}^3$, and by $\nabla$ the Levi-Civita connection associated to $g$. Consider the oriented orthonormal frame $\left\{ e_1, e_2 \right\}$ on $\mathbb{S}^2\backslash\left\{ N,S\right\}$ such that $e_1$ is tangent to the parallels and $e_2$ to the meridians.
Let $k$ be an integer number and define the \textit{\textbf{angle function}} as 
\begin{eqnarray} \nonumber
	\theta_k: \mathbb{S}^2 \backslash \left\{N, S \right\} &\longrightarrow& \mathbb{R}\\
	p &\longmapsto& \theta_k(p) = (k - 1)t + \frac{\pi}{2},\label{Def:angle_function}
\end{eqnarray}
where $t \in [0,2\pi)$ is the \textit{longitude} coordinate of $p$ in $\mathbb{S}^2\backslash\left\{ N,S\right\}$. 

\begin{definition} \label{def:Vks} \index{area-minimizing vector field on $\mathbb{S}^2\backslash\{\pm p\}$}
For $k \in \mathbb{Z}$, define the unit vector field $V_k$ as
\begin{eqnarray} \label{Eq:Vks}
V_k(p) = \cos\left(\theta_k(p)\right)e_1(p) + \sin\left(\theta_k(p)\right)e_2(p),
\end{eqnarray}
where $p \in \mathbb{S}^2\backslash\left\{ N,S\right\}$, $\theta_k$ is the angle function and $\{e_1, e_2\}$ is the oriented orthonormal frame on $\mathbb{S}^2\backslash\left\{ N,S\right\}$ described above.
\end{definition}

Definition~\ref{def:Vks} coincides with the one in [\textcolor{red}{3}].
In either case, the defined vector fields differ from the Pontryagin fields in that the latter are vector fields with only one singularity [\textcolor{red}{1}]. The unit vector fields $V_k$ are parallels along meridians and wind $k-1$ times around the parallel at a constant angle speed with respect to the oriented orthonormal frame $\left\{e_1, e_2 \right\}$, (see [\textcolor{red}{3}, \textcolor{red}{4}]). 
Figure \ref{Fig:Area_Minimizing} shows the vector field $V_k$ on $\mathbb{S}^2\backslash\{N,S\}$ (in blue), where $S_{\alpha}$ and $S_0$ denote the parallel $\alpha$ and the Equator, respectively. 

\begin{figure}[H]
		\centering
		\includegraphics[height=7cm]{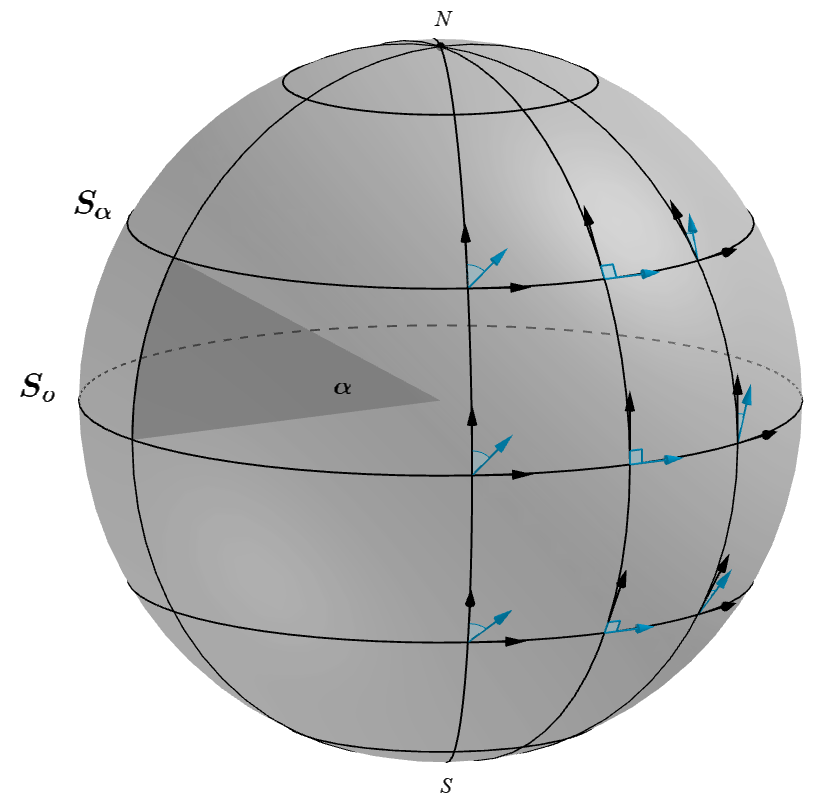}
		\vspace{0.5cm}
		\caption{Area-minimizing unit vector field $V_{k}$ on $\mathbb{S}^2\backslash\{N,S\}$}
		\label{Fig:Area_Minimizing}
	\end{figure}

Consider another oriented orthonormal local frame $\{V_k, V_k^\perp \}$ on $\mathbb{S}^2\backslash\left\{ N,S\right\}$ compatible with the orientation of $\{e_1, e_2\}$ and its dual basis $\{\omega_1, \omega_2\}$. In dimension $2$, the volume of $V_k$ is given by
\begin{eqnarray}
    \vol \left(V_k\right) = \int_{\mathbb{S}^2} \sqrt{1+ \gamma^2 + \delta^2}\,\,\nu,
\end{eqnarray}
where $\nu$ is the volume form, $\gamma = g\left(\nabla_{_{V_k}} V_k \,, V_k^\perp\right)$ and $\delta = g\left(\nabla_{_{V_k^\perp}} V_k^\perp \, , V_k\right)$ are the geodesic curvatures associated to $V_k$ and $V_k^\perp$, respectively.

\noindent Let $\theta_k \in [0, 2\pi)$ be the oriented angle from $e_2$ to $V_k$ defined by \eqref{Def:angle_function} and $\alpha \in \left[ -\frac{\pi}{2}, \frac{\pi}{2} \right]$. If $V_k = \cos\left(\theta_k\right)e_1 + \sin\left(\theta_k\right)e_2$ and $V_k^\perp = -\sin\left(\theta_k\right)e_1 + \cos\left(\theta_k\right)e_2$, then
\begin{equation}\label{curva_geode_tangente}
\gamma = \cos \left(\theta_k\right) \big(\tan \alpha + d\theta_k(e_1)\big) + \sin\left(\theta_k\right) d\theta_k(e_2), 
\end{equation}
\begin{equation}\label{curva_geode_ortogonal}
\delta = \sin (\theta_k) \big(\tan \alpha + d\theta_k(e_1)\big) - \cos (\theta_k) d\theta_k(e_2),
\end{equation}
and 
\begin{eqnarray}\label{Eq:Integrando_Volume}
    1+\gamma^2+\delta^2 = 1 + \big( \tan \alpha + d\theta_k(e_1)\big)^2 + d\theta_k(e_2)^2.
\end{eqnarray}

\noindent The Lemma in [\textcolor{red}{3}] provides a straightforward computation from \eqref{curva_geode_tangente} to \eqref{Eq:Integrando_Volume}.

\noindent Let $S^1_\alpha$ be the parallel of $\mathbb{S}^2$ at latitude $\alpha \in \left[ -\frac{\pi}{2}, \frac{\pi}{2} \right]$ and $S^1_\beta$ be the meridian of $\mathbb{S}^2$ at longitude $\beta \in (0, 2\pi)$. 
 The equation \eqref{Eq:Integrando_Volume} allows us to rewrite the volume functional as an integral depending on the latitude $\alpha$ and the derivatives of $\theta_k$:
\begin{eqnarray}
    \vol\left(V_k\right) = \int_{\mathbb{S}^2} \sqrt{1+  \big( \tan \alpha + d\theta_k(e_1)\big)^2 + d\theta_k(e_2)^2}\,\nu,
\end{eqnarray}

\noindent Notice that $\theta_k$ has constant variation along the parallel $\cos \alpha$, with $\alpha \in \left[ -\frac{\pi}{2}, \frac{\pi}{2} \right]$ constant, in that way
\begin{equation}\label{Eq:variacao_constante}
d\theta_k(e_1) = \frac{k-1}{\cos \alpha}\quad\text{and}\quad d\theta_k(e_2) = 0.
\end{equation}

\noindent Definition \ref{def:Vks} allows us the family of unit vector fields $V_k$ that satisfies \eqref{Eq:variacao_constante} and the next computation obtained from Proposition 8 in [\textcolor{red}{3}], shows us that for each index class $k=\max\{I_N(V), I_S(V)\}$, the unit vector field $V_k$ is area-minimizing on $\mathbb{S}^2\backslash\{N,S\}$.  

\noindent Since  $k > 2$, we consider the ellipse $\varepsilon_{k}$ parametrized by
$\mu(t) = (k \cos t, (k-2)\sin t)$, whose length is
\begin{eqnarray}\label{Eq: Comprimento_Elipse}
    L(\epsilon_k) = 4\int_{0}^{\frac{\pi}{2}}\sqrt{ (k-2)^2 + 4(k-1)\sin^2 (t)} dt.
\end{eqnarray}
If the family of unit vector fields $V_k$ that satisfies \eqref{Eq:variacao_constante}, we have
\begin{align*}
\vol(V_k) 
    & =  \int_{\mathbb{S}^2} \sqrt{1+  \big( \tan \alpha + d\theta_k(e_1)\big)^2 + d\theta_k(e_2)^2}\, \nu\\
    & = \int_{\mathbb{S}^2}{\sqrt{1+ \left( \frac{\sin \alpha + k-1}{\cos\alpha}\right)^2}}\,\nu \\
    & =  \lim_{\alpha_0 \to -\frac{\pi}{2}} \int_{\alpha_0}^{\frac{\pi}{2}}\int_{0}^{2\pi}{\sqrt{ \frac{1+ (k-1)^2 +2(k-1)\sin\left(\frac{\alpha}{2} + \frac{\pi}{4}\right)}{\cos^2\alpha}}}\,\cos\alpha \,d\beta d\alpha \\
    & =  2\pi\int_{-\frac{\pi}{2}}^{\frac{\pi}{2}}\sqrt{1+ (k-1)^2 +2(k-1)\sin \left(\frac{\alpha}{2} + \frac{\pi}{4}\right)}\, d\alpha.
\end{align*}
Letting $t := \frac{\alpha}{2} + \frac{\pi}{4}$ and $L(\epsilon_k)$ given by \eqref{Eq: Comprimento_Elipse}, we obtain
\begin{align*}
\vol(V_k)  & = 4\pi\int_{0}^{\frac{\pi}{2}}\sqrt{(k-2)^2 + 4(k-1)\sin^2t}\,dt \\ 
                 & = \pi L(\varepsilon_k),
\end{align*}

\noindent On the other hand, if $\vol(V_k ) = \pi L(\epsilon_k)$.
\begin{align*}
\vol(V_k) & = \int_{\mathbb{S}^2} \sqrt{1+  \big( \tan \alpha + d\theta_k(e_1)\big)^2 + d\theta_k(e_2)^2}\, \nu\\
    & \geq \int_{\mathbb{S}^2} \sqrt{1+  \big( \tan \alpha + d\theta_k(e_1)\big)^2}\, \nu\\
    & \geq \int_{-\frac{\pi}{2}}^{\frac{\pi}{2}}{|\cos \varphi + \sin\varphi \big( \tan \alpha + d\theta_k(e_1)\big)|}\, d\alpha \\
    & = \pi L(\varepsilon_k),
\end{align*}
then $d\theta_k(e_2) = 0$ and $\cos\varphi(\tan \alpha + d\theta_k(e_1)) = \sin\varphi$, where $\varphi \in \mathbb{R}$. The last inequality is obtained by the general inequality, $\sqrt{a^2 + b^2} \geq |a\cos \varphi + b \sin \varphi|$, for any $a$, $b$, $\varphi \in \mathbb{R}$, taking $a=1$ and $b=\tan \alpha + d\theta_k(e_1)$. We conclude that, $d\theta_k(e_1) =  \tan\varphi - \tan\alpha$ and $\varphi = \varphi(\alpha) = \arctan\left(\tan \alpha + d \theta_k(e_1)\right)$, which implies $d\theta_k(e_1)= \dfrac{k-1}{\cos \alpha}$.



\subsection{Minimally immersed Klein bottles in $T^1\mathbb{S}^2$ arising from area-minimizing unit vector fields on $\mathbb{S}^2\backslash \{N,S \}$} \label{GK}

If a unit vector field minimizes the area  among unit vector fields its image is a minimal surface of the unit tangent bundle [\textcolor{red}{5}]. 
Therefore, for each area-minimizing unit vector fields $V_{k, 2-k}$ on $\mathbb{S}^2\backslash\{N,S\}$ 
provide us a minimal surface in $T^1\mathbb{S}^2$. Surprisingly, when the Poincaré index $k$ is even positive and is neither zero nor $2$, these minimal surfaces are minimally immersed Klein bottles in $T^1\mathbb{S}^2$ as mentioned in Theorem \ref{Thm:GKlein}.

The minimal immersed Klein bottle obtained in $T^1\mathbb{S}^2(1)$ is given by gluing two images of the Moebius strip along their respective boundaries. It is possible to make this collage because the Poincaré index $k$ is even. In this way, the technique used cannot be applied to get similar results to odd Poincaré indices. For more details, see [\textcolor{red}{2}, \textcolor{red}{4}].



\begin{remark*}
    Antonio Ros proved that the Klein bottle cannot be embedded in $T^1\mathbb{S}^2$ (see [\textcolor{red}{10}]). 
\end{remark*}



\subsection{The Euler parametric transformation of $\mathbf{SO}(3)$}\label{subsec:Euler}
In 1975, W. Klingenberg and S. Sasaki showed that the unit tangent bundle $T^1\mathbb{S}^2$ is isometric to the projective space $\mathbb{R}\rm{P}^3(2)$ via the Euler parametric transformation of $\mathbf{SO}(3)$, as defined in \eqref{Matrix_Euler_param}. The complete proof can be found in [\textcolor{red}{1}, \textcolor{red}{8}].
At this point, we recall the maps and isometries that will be needed throughout this paper. Let $\mathbf{SO}(3)$ be the special orthogonal group equipped with the usual bi-invariant metric $\frac{1}{2}\left\langle\cdot ,\cdot \right\rangle$ given by
$\left\langle A,B \right\rangle = tr(A^{t}B)$,
where $A,B \in \mathfrak{so}(3):= T_I SO(3)$, and $I$ is the identity matrix. 
The map $\psi(p, V_p) = \left(p,\,V_p, \, \vec{p} \wedge V_p\right)$ is an isometry from $\big(T^1\mathbb{S}^2, g^{Sas}\big)$ onto $\left(\mathbf{SO}(3), \frac{1}{2}\left\langle\cdot ,\cdot \right\rangle\right)$, where $p \in \mathbb{S}^2$ and $\vec{p}$ is its corresponding position vector in $\mathbb{S}^2$, and $\wedge$ denotes the standard cross product between the vectors in $\mathbb{R}^3$. Indeed, for an element $(p,V_p) \in T^1\mathbb{S}^2$, consider the isometry of $\mathbb{R}^3$ denoted by $\psi(p,V_p)$ and defined as below:
\begin{align*}
    \psi(p,V_p)(e_1) = p, \hspace{1cm} \psi(p,V_p)(e_2) = V_p, \hspace{1cm} \psi(p,V_p)(e_3) = \vec{p}\wedge V_p,
\end{align*}
where $\{ e_1, e_2, e_3 \}$ is the usual orthonormal basis of $\mathbb{R}^3$.

\noindent 
If $(p,V_p) \in T^1\mathbb{S}^2$ and $\xi \in T_{(p,V_p)} T^1\mathbb{S}^2$, consider $\mu(t) = (p(t), V_p(t))$ be a curve in $T^1\mathbb{S}^2$, we can assume without loss of generality that, $p(0)= p = e_1$, $V_p(0) = V_p = e_2$ and $\mu(0) = \xi$. Then $d\psi_{(e_1,e_2)}\big(\xi\big) \in \mathfrak{so}(3)$ and
\begin{align*}
    d\psi_{(e_1,e_2)}\big(\xi\big) = \bigg(p'(0), \,\, V'_p(0), \,\, p'(0)\wedge e_2 + e_1 \wedge V'_p(0)\bigg).
\end{align*}
By straightforward computation one obtains
\begin{align*}
    \frac{1}{2}\left\langle d\psi_{(e_1,e_2)}\big(\xi\big) ,d\psi_{(e_1,e_2)}\big(\xi\big) \right\rangle = |p'_2(0)|^2 + |p'_3(0)|^2 + |V'_3(0)|^2, 
\end{align*}
and
\begin{align*}
    g^{Sas}(\xi, \xi)= g\big(p'(0), p'(0)\big) + g \left( \frac{\nabla V}{dt}(0), \frac{\nabla V}{dt}(0) \right) = |p'_2(0)|^2 + |p'_3(0)|^2 + |V'_3(0)|^2. 
\end{align*}
It remains to show that $\mathbb{R}\rm{P}^3(2)$ is isometric to $\mathbf{SO}(3)$. The key to proving this is to rewrite the Euler parametric transformation through an action by conjugation of subgroups, which we will explain below.

\noindent
Let $g$ denote the standard metric on $\mathbb{R}^n$. Consider the \textit{\textbf{Euler parametric transformation}} 
\begin{align}
    \label{Matrix_Euler_param} \index{Euler parametric transformation of $\mathbf{SO}(3)$}\nonumber 
    \Phi: \left( \mathbb{S}^3(2),g \right) &\longrightarrow \left(\mathbf{SO}(3), \frac{1}{2}\left\langle\cdot ,\cdot \right\rangle\right)\\ 
    (x_1,x_2,x_3,x_4) &\longmapsto
    \frac{1}{4}\begin{pmatrix}
    x_1^2+x_2^2-x_3^2-x_4^2 & 2x_1x_4 +2x_2x_3 & -2x_1x_3+2x_2x_4\\
    -2x_1x_4 + 2x_2x_3 & x_1^2-x_2^2+x_3^2-x_4^2 & 2x_1x_2+2x_3x_4\\
    2x_1x_3+2x_2x_4 & -2x_1x_2+2x_3x_4 & x_1^2-x_2^2-x_3^2+x_4^2
    \end{pmatrix}.
\end{align}
We identify $\mathbb{R}^4$ with the quaternions $\mathbb{H}$, and $\mathbb{R}^3$ with the imaginary quaternions $\rm{Im}\mathbb{H}$. Thus, we have $\mathbf{SO}(3) = \mathbf{SO}(\rm{Im}\mathbb{H})$. Since $\mathbb{S}^3$ is a subgroup of $\mathbb{H}$ and $\mathbb{S}^3$ acts on $\mathbf{SO}(\rm{Im}\mathbb{H})$ by conjugation, the Euler parametric transformation $\Phi$ defined in \eqref{Matrix_Euler_param} can be expressed by 
\begin{eqnarray*}
    \Phi(q)(u)=q^{-1}uq, \hspace{1cm} q \in \mathbb{S}^3.
\end{eqnarray*}
The map $\Phi$ is a double covering of $\mathbf{SO}(3)$ and induces a diffeomorphism $\overline{\Phi}$ between $\mathbb{R}{\rm{P}}^3(2)$ and $\left(\mathbf{SO}(3), \frac{1}{2}\left\langle\cdot ,\cdot \right\rangle\right)$, where $\overline{g}$ is the quotient metric. The differential of $\Phi$ at $e=1 \in \mathbb{S}^3$ is a linear map from $T_e\mathbb{S}^3 = \rm{Im}\mathbb{H}$ to the Lie algebra $\mathfrak{so}(\rm{Im}\mathbb{H})$ is given by $d \Phi_e(X)(u) =  Xu-uX$, for all $X \in T_e\mathbb{S}^3$ and $u \in \rm{Im}\mathbb{H}$. Since $X$ and $u$ are purely imaginary, the product $Xu$ is again imaginary and can be identified with the usual wedge product in $\mathbb{R}^3 \cong \rm{Im}\mathbb{H}$. Thus,
\begin{align}
    d\Phi_e(X) = \begin{pmatrix}
    0 & -X_3 & X_2\\
    X_3 & 0 & -X_1\\
    -X_2 & X_1 & 0
    \end{pmatrix}.
\end{align}
By straightforward computation one obtains
\begin{align*}
    \left\langle d\Phi_e(X)) , d\Phi_e(X)) \right\rangle = 8 || X ||^2 \hspace{1cm}\mbox{and}\hspace{1cm} \Phi^*\left\langle \, \, , \, \,  \right\rangle = 8g.
\end{align*}
The projective space can be defined as $\mathbb{R}\rm{P}^3(2) = \mathbb{S}^3(2)/\mathbb{Z}_2$. The map $\Phi$ induces an isometry $\overline{\Phi}$ between $\left(\mathbb{R}{\rm{P}}^3(2), 4\overline{g}\right)$ and $\left(\mathbf{SO}(3), \frac{1}{2}\left\langle\cdot ,\cdot \right\rangle\right)$, where $\overline{g}$ is the quotient metric.  
The composition $\varphi = \psi^{-1}\circ \overline\Phi$ is then an isometry between $\left(\mathbb{R}{\rm{P}}^3(2), 4\overline{g}\right)$ and $\big( T^1\mathbb{S}^2, g^{Sas}\big)$. Moreover, by definition $\pi(\varphi(e^{i\theta} q)) = \Phi(e^{i\theta} q)(i) = \pi(\varphi(q))$, which means $\varphi$ preserves the natural structures of $\mathbb{S}^1$-bundle over $\mathbb{S}^2$ of both spaces $\mathbb{S}^3$ and $T^1\mathbb{S}^2$.
 \begin{eqnarray}
    \label{Eq:Isometria_Tangente_RP3}
    \xymatrix{
    \big( \mathbb{R}\rm{P}^3(2), 4\overline{g} \big) 
    \ar[rr]^{\psi^{-1}\circ \overline{\Phi}}
    \ar[rd]_{\overline\Phi} 
    & & \big(T^1\mathbb{S}^2, g^{Sas}\big)  \\
    & \big( \mathbf{SO}(3), \frac{1}{2}\left\langle \cdot , \cdot \right \rangle \big) \ar[ru]_{\psi^{-1}}
    }
    \end{eqnarray}
To simplify the notation, we shall denote by $\mathcal{E}: \mathbb{S}^3(2) \rightarrow \mathbf{SO}(3)$  the Euler transformation. 


\subsection{Lawson $\tau$ surfaces}

Let $\mathbb{S}^3 = \{(z,w) \in \mathbb{C}^2;\, |z|^2 + |w|^2 = 1\}$ be a unit $3$-sphere. 
\begin{definition}
    A Lawson surface $\tau_{n,m}$ in $\mathbb{S}^3$  is defined as the image of the doubly periodic immersion $\varphi_{n,m}: \mathbb{R}^2 \hookrightarrow \mathbb{S}^3 \subset \mathbb{R}^4$, given by the explicit formula:
    \begin{eqnarray}\label{Eq:tau_surface}
    \varphi_{n,m}(x, y) = \big(\cos(mx)\cos(y),\,\sin(mx) \cos(y),\, \cos(nx) \sin(y),\, \sin(nx) \sin(y)\big),\,\, n,m \in \mathbb{Z}.     
    \end{eqnarray}
\end{definition}

\noindent Theorem $1$ in [\textcolor{red}{9}] shows that $\tau_{n,m}$ is a compact, non-singular minimal submanifold with Euler characteristic zero. Observe that, from equation given by \eqref{Eq:tau_surface}, $\tau_{n,m} = \tau_{n',m'}$ (up to congruences) if and only if there exists an integer $p$ such that $m \cong m' (\mbox{mod} \, p)$ and $n \cong n'(\mbox{mod} \, p)$. Hence, we have a countable family of compact surfaces with the following properties.

\begin{theorem}[Lawson, \textcolor{red}{9}] \label{Thm:Propriedades_Lawsons}
To each unordered pair of integers $m, n$ with
$(m, n) = 1$ there corresponds a distinct, compact minimal surface $\tau_{n,m}$ of Euler characteristic zero in $\mathbb{S}^3$ given by \eqref{Eq:tau_surface}. Moreover,
\begin{itemize}
    \item [(a)] $\tau_{n,m}$ is non-orientable (an immersed Klein bottle) if and only if $\,2/mn$.
    \item [(b)] $\tau_{n,m}$ is real algebraic of degree $n + m$ and satisfies the equation
    \begin{eqnarray*}
        {\rm Im} \{z^n\bar{w}^m\} = 0.
    \end{eqnarray*}
    \item [(c)] Each $\tau_{n,m}$ admits a distinct one-parameter group of self-congruences. 
    \item [(d)] Each $\tau_{n,m}$ is geodesically ruled.
    \item [(e)] Area ($\tau_{n,m}$) $\geq 2\pi^2 \min\{n,m\}$. 
    \item [(f)]  $\tau_{1,1}$ is the Clifford torus and the only surface $\tau_{n,m}$ without self-intersections.
\end{itemize}
\end{theorem}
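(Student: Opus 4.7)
My approach is to work directly with the explicit parametrization $\varphi_{n,m}$. A short derivative computation gives the induced metric in the orthogonal form
\[
ds^2 = h(y)\, dx^2 + dy^2, \qquad h(y) = m^2 \cos^2 y + n^2 \sin^2 y,
\]
which is nondegenerate whenever $mn \neq 0$, so $\varphi_{n,m}$ is an immersion, and the double periodicity $\varphi(x + 2\pi, y) = \varphi(x, y + 2\pi) = \varphi(x, y)$ forces compactness of the image. For minimality I would invoke the standard characterization that a surface $\Sigma \subset \mathbb{S}^3 \subset \mathbb{R}^4$ is minimal in $\mathbb{S}^3$ exactly when each component of its position vector satisfies $\Delta_g \varphi = -2\varphi$. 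The intrinsic Laplacian in the metric above is
\[
\Delta_g f = \frac{1}{h(y)}\partial_x^2 f + \partial_y^2 f + \frac{h'(y)}{2\,h(y)}\partial_y f,
\]
and a direct calculation shows $\Delta_g(\cos(mx)\cos y) = -2\cos(mx)\cos y$ after using the identity $m^2 + (n^2 - m^2)\sin^2 y = h(y)$; the remaining three coordinates follow by identical manipulations.

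Properties (b)--(d) and (f) are fairly short. Part (d) comes from noting $\varphi_{yy} = -\varphi$ at fixed $x$, which is the equation of a great circle parametrized by arclength, so $\tau_{n,m}$ is ruled by great circles. Part (c) follows because $x \mapsto x + a$ is induced by the ambient $\mathbf{SO}(4)$ rotation of $\mathbb{R}^4 \cong \mathbb{C}^2$ multiplying the first complex factor by $e^{ima}$ and the second by $e^{ina}$, giving a one-parameter subgroup preserving $\tau_{n,m}$. For (b), substituting $z = e^{imx}\cos y$ and $w = e^{inx}\sin y$ yields $z^n \bar w^m = \cos^n y \sin^m y \in \mathbb{R}$, placing $\tau_{n,m}$ in the zero set of $\mathrm{Im}\{z^n \bar w^m\}$; for the reverse inclusion, any $(r e^{i\alpha}, s e^{i\beta})$ on the variety satisfies $n\alpha - m\beta \in \pi\mathbb{Z}$, which together with $r^2 + s^2 = 1$ and $\gcd(m, n) = 1$ lets one recover $(x, y)$ via Bezout. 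For (f), the case $(n, m) = (1, 1)$ gives $\mathrm{Im}(z \bar w) = 0$, i.e.\ $z/w \in \mathbb{R} \cup \{\infty\}$, which together with flatness of the induced metric ($h \equiv 1$) identifies $\tau_{1,1}$ with the Clifford torus after an ambient isometry of $\mathbb{S}^3$.

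For (a) I would compute the deck transformations of $\varphi_{n,m} : \mathbb{R}^2 \to \tau_{n,m}$ by hand: the translation subgroup is $2\pi \mathbb{Z}^2$, enlarged by $(\pi, \pi)$ exactly when both $m, n$ are odd, while orientation-reversing involutions of the form $(x, y) \mapsto (x + a_0, -y + b_0)$ exist exactly when one of $m, n$ is even (which under $\gcd(m, n) = 1$ is equivalent to $2 \mid mn$), giving a Klein bottle quotient in that case and a torus otherwise, so $\chi(\tau_{n,m}) = 0$. For (e), after passing to this two-to-one quotient the area integral reduces to
\[
\area(\tau_{n,m}) = \pi \int_0^{2\pi} \sqrt{m^2 \cos^2 y + n^2 \sin^2 y}\, dy,
\]
which is $\pi$ times the perimeter of the ellipse with semi-axes $|m|$ and $|n|$; the pointwise bound $\sqrt{h(y)} \geq \min(|m|, |n|)$ then yields $\area(\tau_{n,m}) \geq 2\pi^2 \min(|m|, |n|)$. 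Distinctness of the surfaces for different unordered pairs $\{n, m\}$ can be extracted from (b) via the algebraic degree $n + m$, or equivalently from (e) by comparing ellipse perimeters.

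The delicate step is the orientation analysis for (a): pinning down all orientation-reversing deck involutions requires matching Bezout representations of $\gcd(m, n) = 1$ against the parities of $m$ and $n$, and one must also exclude further nontranslational orientation-preserving identifications that could confuse the topology count. Part (f) likewise needs one to verify that the variety in (b) becomes singular (and hence self-intersecting) for any pair $(n, m) \neq (1, 1)$; a workable route is to examine common zeros of $\mathrm{Im}\{z^n \bar w^m\}$ with the defining equation of $\mathbb{S}^3$ along the circles $z = 0$ and $w = 0$, where the polynomial vanishes to high order and forces multiple sheets of the surface to meet.
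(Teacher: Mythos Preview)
The paper does not prove this theorem; it is quoted as background from Lawson's 1970 paper [9], with no argument supplied, so there is nothing in the present paper to compare your proposal against.

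That said, your sketch is essentially the direct route Lawson himself takes: compute the induced metric from the explicit parametrization, verify minimality via the eigenfunction equation $\Delta_g\varphi=-2\varphi$, read off the geodesic ruling from $\varphi_{yy}=-\varphi$ and the one-parameter symmetry group from the ambient $\mathbb{C}^2$ rotations, and settle the topology by classifying the deck transformations of $\varphi:\mathbb{R}^2\to\tau_{n,m}$. Your area formula is correct once one knows the induced map $T^2\to\tau_{n,m}$ is exactly two-to-one in every coprime case (via the translation $(\pi,\pi)$ when $m,n$ are both odd, and via the orientation-reversing involution $(x,y)\mapsto(x+\pi,-y)$ when exactly one is even), and your idea for (f)---looking at the multiplicity of the image along the great circles $\{z=0\}$ and $\{w=0\}$, where $\varphi$ winds $|n|$ and $|m|$ times respectively---is the right mechanism for producing self-intersections whenever $\max(|m|,|n|)>1$. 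The points you flag as delicate (excluding further deck identifications in (a), and the converse inclusion in (b)) are genuinely the places where care is needed, but they do not indicate a wrong approach; they are exactly the bookkeeping Lawson carries out.
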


\noindent The classes of compact minimal surfaces $\tau_{n,m}$ of Euler characteristic zero in $\mathbb{S}^3$, given by $\tau_{n,n+1}$ and $\tau_{r,s}$, where $r=2n+1$ and $s=2n+3$, are classes of ruled minimal surfaces of genus $1$ in $\mathbb{S}^3$ given by the explicit formulas:
\begin{eqnarray*}
    \varphi_{n,n+1}(x, y)&=& 
    \bigg(\cos\big((n+1)x\big)\cos(y),\,
    \sin\big((n+1)x\big) \cos(y),\, 
    \cos(nx) \sin(y),\, 
    \sin(nx) \sin(y)\bigg),       
\end{eqnarray*}
\begin{eqnarray*}
    \varphi_{r,s}(x, y) = 
    \bigg(\cos\big((2n+1)x\big)\cos(y),\,
    \sin\big((2n+1)x\big) \cos(y),\, 
    \cos((2n+3)x) \sin(y),\, 
    \sin((2n+3)x) \sin(y)\bigg).        
\end{eqnarray*}

\noindent Observe that the Clifford torus $\tau_{1,1}$ is congruent to $\tau_{-1,1}$ mod $2$.


\section{Proofs of Theorems}
Since the unit tangent bundle $T^1\mathbb{S}^2$ is isometric to the projective space $\mathbb{R}\rm{P}^3(2)= \mathbb{S}^3(2)/\mathbb{Z}_2$, as mentioned in \eqref{Eq:Isometria_Tangente_RP3}, so at this point we are interested in the family of minimal surfaces $\tau_{n,m}$ that lie on the $3$-sphere of radius $2$. 

\begin{definition}\label{Def:Lawson_3-sphere(2)}
    A Lawson surface $\tau_{n,m}$ in $\mathbb{S}^3(2)$  is defined as the image of the doubly periodic immersion $\varphi_{n,m}: \mathbb{R}^2 \hookrightarrow \mathbb{S}^3(2) \subset \mathbb{R}^4$, given by the explicit formula
    \begin{eqnarray}\label{Eq:tau_Lawson_3-sphere(2)}
    \varphi_{n,m}(x, y) = \big(2\cos(nx)\cos(y),\,2\sin(nx) \cos(y),\, 2\cos(mx) \sin(y),\, 2
    \sin(mx) \sin(y)\big),\,\, n,m \in \mathbb{Z}.     
    \end{eqnarray}
\end{definition}

\noindent To each unordered pair of integers $m, n$ with
$(m, n) = 1$, there corresponds a distinct compact minimal Lawson surface $\tau_{n,m}$ of Euler characteristic zero in $\mathbb{S}^3(2)$, given by \eqref{Eq:tau_Lawson_3-sphere(2)} with the equivalent properties stated in Theorem \ref{Thm:Propriedades_Lawsons}, modulo spherical homothety. 

To each family of minimal Lawson surfaces $\tau_{n,m}$ in $\mathbb{S}^3(2)$, there corresponds a family of minimal surfaces $\Sigma_{n,m}$ in $T^1\mathbb{S}^2$ via the Euler transformation $\mathcal{E}: \mathbb{S}^3(2) \rightarrow \mathbf{SO}(3)$. In what follows, for each family of minimal Lawson surfaces $\tau_{n,m}$ in $\mathbb{S}^3(2)$, we compute the corresponding matrix $M_{n,m}$ in $\mathbf{SO}(3)$. 
From \eqref{Matrix_Euler_param} and \eqref{Eq:tau_Lawson_3-sphere(2)} above, we have 
\begin{eqnarray}
   \Phi(\mathrm{Im}(\varphi)) = \Phi(\tau_{n,m}) = M_{n,m} \in \mathbf{SO}(3). 
\end{eqnarray}

\noindent
Therefore, if $(x,y) \in \mathbb{R}^2$, then the matrix $M_{n,m}$  in $\mathbf{SO}(3)$ is given by
{\fontsize{10}{10}
\begin{eqnarray}
    \begin{pmatrix}\label{Matrix:Lawson}
    \cos(2y)                       &&  \sin(2y)\sin\big((m+n)x \big)                 & & -  \sin(2y)\cos\big((m+n)x\big)\\
    - \sin(2y)\sin\big((n-m)x\big) && \cos^2(y)\cos\big(2mx\big)+ \sin^2(y)\cos(2nx) & &  \cos^2(y)\sin(2mx)+ \sin^2(y)\sin(2nx)\\
    \sin(2y) \cos\big((n-m)x\big)  && -\cos^2(y)\sin(2mx)+ \sin^2(y)\sin(2nx)        & & \cos^2(y)\cos\big(2mx\big) - \sin^2(y)\cos(2nx)
    \end{pmatrix}.\\ \nonumber
\end{eqnarray}
}

\noindent
Using the Euler transformation $\mathcal{E}: \mathbb{S}^3(2) \rightarrow \mathbf{SO}(3)$, we shall denote throughout the following:
{\fontsize{10}{10}
\begin{eqnarray}\label{Eq:ponto}
    p(x,y) &=& \bigg(\cos(2y), \,  -\sin(2y)\sin\big(n-m)x\big), \, \sin(2y)\cos\big((n-m)x\big)\bigg),\\ \nonumber
    V(p) &=& \bigg( \sin(2y)\sin\big((m+n)x \big), \,   \cos^2(y)\cos\big(2mx\big)+ \sin^2(y)\cos(2nx), \, -\cos^2(y)\sin(2mx)+ \sin^2(y)\sin(2nx) \bigg).\\ \label{Eq:vetor}
\end{eqnarray}

\noindent

\noindent Observe that $(x, y) \longmapsto  p(x,y)=\big(\cos(2y), \,  -\sin(2y)\sin\big(n-m)x\big), \, \sin(2y)\cos\big((n-m)x\big)\big)$ is a parametrization of the unit $2$-sphere $\mathbb{S}^2$, where $y$ is the latitude and $x$ is the longitude, for integers $n$ and $m$ such that $(n,m)=1$. 

\noindent Similarly, the classes of compact minimal surfaces of Euler characteristic zero in $\mathbb{S}^3(2)$, given by $\tau_{n,n+1}$, have the explicit formula
\begin{eqnarray}\label{Eq:Lawson_tau_n+1 n}
    \varphi_{n,n+1}(x, y)&=& 
    \bigg(2\cos\big((n+1)x\big)\cos(y),\,
    2\sin\big((n+1)x\big) \cos(y),\, 
    2\cos(nx) \sin(y),\, 
    2\sin(nx) \sin(y)\bigg),      
\end{eqnarray}

\noindent which corresponds to the matrix $M_{n,n+1}$ in $\mathbf{SO}(3)$, described by
{\fontsize{10}{10}
\begin{eqnarray}\label{Matrix:Lawson_n_n+1}
    \begin{pmatrix}
    \cos(2y)        &&  \sin(2y) \sin\big((2n+1)x\big)                    & &  - \sin(2y)\cos\big((2n+1)x\big)\\
    \sin(2y)\sin(x) && \cos^2(y)\cos\big(2(n+1)x\big)+ \sin^2(y)\cos(2nx) & &  \cos^2(y)\sin\big(2(n+1)x\big)+ \sin^2(y)\sin(2nx)\\
    \sin(2y)\cos(x) && -\cos^2(y)\sin\big(2(n+1)x\big)+ \sin^2(y)\sin(2nx) & & \cos^2(y)\cos\big(2(n+1)x\big) - \sin^2(y)\cos(2nx)
    \end{pmatrix},
\end{eqnarray}}

\noindent and the classes of compact minimal surface of Euler characteristic zero in $\mathbb{S}^3(2)$ given by $\tau_{2n+1,2n+3}$ have the explicit formula
{\fontsize{10}{10}\selectfont
\begin{eqnarray}\label{Eq:Lawson_tau_2n+1 2n+3}
    \varphi_{2n+1,2n+3}(x, y) = 
    \bigg(2\cos\big((2n+1)x\big)\cos(y),\,
    2\sin\big((2n+1)x\big) \cos(y),\, 
    2\cos((2n+3)x) \sin(y),\, 
    2\sin((2n+3)x) \sin(y)\bigg),        
    \end{eqnarray}}
    which corresponds to the matrix $M_{2n+1,2n+3} \in \mathbf{SO}(3)$, described by
{\fontsize{9}{9}
\begin{align}
    \begin{pmatrix}\label{Matrix:Lawson_impares}
    \cos(2y)                       &&  \sin(2y)\sin\big(4nx \big)                 & & -  \sin(2y)\cos\big(4nx\big)\\
    \sin(2y)\sin\big(2x\big) && \cos^2(y)\cos\big(2(2n+1))x\big)+ \sin^2(y)\cos(2(2n+3))x) & &  \cos^2(y)\sin(2(2n+1)x)+ \sin^2(y)\sin(2(2n+3)x)\\
    \sin(2y) \cos\big(2x\big)  && -\cos^2(y)\sin(2(2n+1)x)+ \sin^2(y)\sin(2(2n+3)x)        & & \cos^2(y)\cos\big(2(2n+1)x\big) - \sin^2(y)\cos(2(2n+3)x)
    \end{pmatrix}. 
\end{align}
}    

\noindent We aim to determine the number of vector fields associated with point $p$, generated by two distinct parameter pairs. Now, for the Lawson surface $\tau_{n,n+1}$, we shall prove that there is only one such vector field $V(p) \in T^1\mathbb{S}^2$ at the point 
\begin{eqnarray}\label{Eq:ponto_n_n+1}  
    p(x,y) & = \big(\cos(2y), \,  \sin(2y)\sin\big(x\big), \, \sin(2y)\cos(x) \big),
\end{eqnarray}
    where
\begin{align}
  V(p) & = \bigg( \sin\big((2n+1)x\big), \,   \cos^2(y)\cos\big(2(n+1)x\big)+ \sin^2(y)\cos(2nx), \, -\cos^2(y)\sin\big(2(n+1)x\big)+ \sin^2(y)\sin(2nx) \bigg).
    \label{Eq:vetor_n_n+1}   
\end{align}

\begin{lemma} \label{Lemma:sucessivos}
    Consider the Lawson surface $\tau_{n,n+1}$, for each integer $n\geq 1$. If $A_1, A_2 \in \mathbb{R}^2$, $p_1 = p(A_1)$ and $p_2 = p(A_2)$, then
\begin{itemize}
    \item [i)] $p_1 = p_2$, if and only if, there exist $(a, b) \in \mathbb{R}^2$ such that
    \begin{eqnarray*}
        A_1=(
    a, b) \hspace{0.4cm} \mbox{and}\hspace{0.4cm} A_2 =(k_1\pi-b, a + (2k_2+1)\pi), \hspace{0.2cm}k_1, k_2 \in \mathbb{Z}.
    \end{eqnarray*}
    \item [ii)] If $p_1 = p_2$ then $V(p_1)=V(p_2)$.
\end{itemize}    
\end{lemma}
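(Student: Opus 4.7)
The plan is to compare $p(A_1)$ and $p(A_2)$ coordinate-by-coordinate, using the explicit formula
\[
p(x,y) = (\cos 2y,\ \sin 2y\, \sin x,\ \sin 2y\, \cos x).
\]
First I would exploit the first coordinate: $\cos 2y_1 = \cos 2y_2$ forces $y_2 \equiv \pm y_1 \pmod{\pi}$. Splitting into the two sign branches and invoking the remaining two coordinate equations, the $+$ branch yields the purely periodic solution $x_2 \equiv x_1 \pmod{2\pi}$, $y_2 \equiv y_1 \pmod{\pi}$, while the $-$ branch gives $\sin 2y_2 = -\sin 2y_1$ and then forces $\sin x_2 = -\sin x_1$, $\cos x_2 = -\cos x_1$, i.e.\ $x_2 \equiv x_1 + \pi \pmod{2\pi}$. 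Setting $a := x_1$, $b := y_1$ and packaging the integer shifts into the parameters $k_1, k_2$ then produces the form of $A_2$ stated in (i). The degenerate situation $\sin 2y_1 = 0$, where $p_1$ is one of the poles $(\pm 1, 0, 0)$, is handled separately, since there $x$ becomes irrelevant.

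For (ii) I would substitute the relations from (i) directly into the second column of the matrix $M_{n,n+1}$,
\[
V(p) = \bigl(\sin 2y\,\sin((2n+1)x),\ \cos^2 y\,\cos(2(n+1)x) + \sin^2 y\,\cos(2nx),\ -\cos^2 y\,\sin(2(n+1)x) + \sin^2 y\,\sin(2nx)\bigr),
\]
and check agreement term by term. The second and third components offer no resistance: the frequencies $2n$ and $2(n+1)$ are both even, so $\cos(2nx)$, $\sin(2nx)$, $\cos(2(n+1)x)$, $\sin(2(n+1)x)$ are $\pi$-periodic in $x$, while $\cos^2 y$ and $\sin^2 y$ are invariant under $y\mapsto -y + k\pi$. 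The first component requires a genuine cancellation of signs: under the nontrivial transformation of (i), $\sin 2y$ flips sign and so does $\sin((2n+1)x)$, because $(2n+1)(2k_1+1)$ is odd; the two sign flips then compensate exactly.

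I expect the main obstacle to be precisely this sign-matching in the first component of $V$. It is the one place where the specific combinatorial structure of $\tau_{n,n+1}$ -- the fact that $m+n = 2n+1$ is odd because $m$ and $n$ are \emph{consecutive} integers -- enters essentially. If the parities were otherwise, $V(p)$ would be multivalued at the doubly covered points and the identification with a single-valued area-minimizing unit vector field on $\mathbb{S}^2\setminus\{N,S\}$ would break down. Everything else reduces to routine trigonometric bookkeeping.
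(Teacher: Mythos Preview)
Your approach is exactly what the paper does: the paper's proof is a single line stating that the result ``follows from a straightforward computation'' using the explicit expressions for $p(x,y)$ and $V(p)$, and you have carried out precisely that computation, including the correct parity observation that $(2n+1)$ is odd so the two sign flips in the first component of $V$ cancel. One minor remark: the branching you obtain, namely $(x_2,y_2)=(x_1+(2k_2+1)\pi,\ k_1\pi-y_1)$ together with the purely periodic identification, is the correct description; the display for $A_2$ in the lemma statement appears to have its two coordinates interchanged, so do not be troubled that your formula does not match it literally.
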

\begin{proof}

The result follows from a straightforward computation using the equations \eqref{Eq:ponto_n_n+1} and \eqref{Eq:vetor_n_n+1}, which are provided through the Euler transformation $\mathcal{E}: \mathbb{S}^3(2) \rightarrow \mathbf{SO}(3)$.
\end{proof}
\noindent
It follows immediately from the Lemma \ref{Lemma:sucessivos} that
\begin{proposition}\label{Prop:UnicoVetor}
    The Lawson surface $\tau_{n,n+1}$ defines, through the Euler transformation $\mathcal{E}: \mathbb{S}^3(2) \rightarrow \mathbf{SO}(3)$, only one unit vector field on $\mathbb{S}^2\backslash\{N,S\}$, for each integer $n\geq 1$.
\end{proposition}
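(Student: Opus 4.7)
The plan is to view the proposition as a well-definedness statement and reduce it directly to Lemma~\ref{Lemma:sucessivos}. The map $(x,y) \mapsto \mathcal{E}(\varphi_{n,n+1}(x,y))$ into $\mathbf{SO}(3) \cong T^1\mathbb{S}^2$ produces, at each parameter, a pair $(p(x,y), V(p(x,y)))$, as read off from the first column and second column of the matrix $M_{n,n+1}$ in \eqref{Matrix:Lawson_n_n+1} after applying the isometry $\psi^{-1}$. The question is whether, as $(x,y)$ ranges over $\mathbb{R}^2$, the assignment $p \mapsto V(p)$ is a genuine function (a single vector field) rather than a multivalued correspondence; equivalently, whether every parameter pair $(x,y)$ mapping to the same base point $p$ yields the same tangent vector at $p$.

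First, I would note that the base-point map $(x,y) \mapsto p(x,y)$ defined in \eqref{Eq:ponto_n_n+1} is a many-to-one parametrization of $\mathbb{S}^2$ by $\mathbb{R}^2$, so well-definedness of $V$ at a point $p \in \mathbb{S}^2\setminus\{N,S\}$ requires checking compatibility on all fibers of this map. Part (i) of Lemma~\ref{Lemma:sucessivos} provides the complete list of such coincidences: if $p(A_1) = p(A_2)$ with $A_1 = (a,b)$, then necessarily $A_2 = (k_1 \pi - b,\, a + (2k_2+1)\pi)$ for some $k_1, k_2 \in \mathbb{Z}$ (and of course $A_2 = (a + 2k\pi, b + 2\ell\pi)$ from the obvious periodicities, which are a subcase). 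Part (ii) then says that for every such pair, $V(p(A_1)) = V(p(A_2))$.

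Combining these two statements, the value of $V$ at any point $p \in \mathbb{S}^2\setminus\{N,S\}$ is independent of the chosen preimage under $(x,y) \mapsto p(x,y)$, so the Lawson surface $\tau_{n,n+1}$ descends, via $\mathcal{E}$ and the isometry $\psi^{-1}$, to a single well-defined section of $T^1\mathbb{S}^2$ over $\mathbb{S}^2 \setminus \{N,S\}$. The exclusion of the poles $N,S$ is natural because these correspond to $y \equiv 0 \pmod{\pi/2}$, at which $p(x,y) = (\pm 1, 0, 0)$ is independent of $x$ and $V(p)$ depends genuinely on $x$, reflecting the unavoidable singularity of any continuous unit tangent field on $\mathbb{S}^2$.

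The only real obstacle in this argument is logical rather than computational: one must be sure that part (i) of the lemma exhausts all coincidences $p(A_1) = p(A_2)$, for otherwise there could be exotic preimages at which the vectors disagree. Since the lemma provides an \emph{if and only if}, this concern is eliminated, and the proposition follows immediately. I would therefore present the proof as a two-line deduction: ``By Lemma~\ref{Lemma:sucessivos}(i), the preimages of any point $p \in \mathbb{S}^2\setminus\{N,S\}$ under $(x,y)\mapsto p(x,y)$ are fully characterized; by (ii), $V$ is constant on each such fiber. Hence $p \mapsto V(p)$ is a well-defined unit vector field on $\mathbb{S}^2\setminus\{N,S\}$.''
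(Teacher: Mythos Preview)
Your proposal is correct and matches the paper's approach exactly: the paper's own proof of this proposition is the single line ``It follows immediately from Lemma~\ref{Lemma:sucessivos},'' and you have simply spelled out that deduction. (One small inaccuracy: the trivial $2\pi$-periodicities $(a+2k\pi,\,b+2\ell\pi)$ are not literally a subcase of the form $(k_1\pi-b,\,a+(2k_2+1)\pi)$ in Lemma~\ref{Lemma:sucessivos}(i), but since your argument rests only on the \emph{if and only if} of part~(i) together with part~(ii), this is harmless.)
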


Our next step will be to establish a relation between a compact minimal Lawson surface $\tau_{n,n+1}$ in $\mathbb{S}^3(2)$ and area-minimizing unit vector fields $V_k$ on $\mathbb{S}^2\backslash\{N,S\}$, as defined in \eqref{Eq:Vks}, where $k \in 2\mathbb{Z}\backslash\{0,2\}$ is Poincaré index around the singularity $N$ or $S$.


\begin{proposition}\label{Prop:ruled_Par} For each integer $n$, the compact minimal Lawson surface $\tau_{n,n+1}$ in $\mathbb{S}^3(2)$ corresponds to an area-minimizing unit vector field $V_k$ on $\mathbb{S}^2\backslash\{N,S\}$, where $k \in 2\mathbb{Z}\backslash\{0,2\}$ is the Poincaré index around the singularity at $N$ or $S$.
\end{proposition}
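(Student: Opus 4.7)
The plan is to use the Euler transformation $\mathcal{E}$ to pull $\tau_{n,n+1}$ back to a unit vector field on $\mathbb{S}^2\setminus\{N,S\}$, express that field in the parallel/meridian frame $\{e_1,e_2\}$ of Definition~\ref{def:Vks}, and recognize it as a member of the area-minimizing family $V_k$. Proposition~\ref{Prop:UnicoVetor} already ensures that $\tau_{n,n+1}$ defines exactly one such vector field; its base point $p(x,y)$ and its vector $V(p)$ are the first and second columns of the matrix $M_{n,n+1}$ in \eqref{Matrix:Lawson_n_n+1}, namely formulas \eqref{Eq:ponto_n_n+1} and \eqref{Eq:vetor_n_n+1}.

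First I would interpret $x\in[0,2\pi)$ as the longitude and $\alpha:=\pi/2-2y$ as the latitude of $p$ (with $N=(1,0,0)$, $S=(-1,0,0)$), and write down the oriented frame tangent to parallels and meridians,
\begin{equation*}
e_1(p)=\bigl(0,\cos x,-\sin x\bigr),\qquad e_2(p)=\bigl(\sin(2y),\,-\cos(2y)\sin x,\,-\cos(2y)\cos x\bigr).
\end{equation*}
Next I would expand the inner products $V(p)\cdot e_1(p)$ and $V(p)\cdot e_2(p)$. Using the standard addition formulas, the weights $\cos^2 y$ and $\sin^2 y$ multiplying $\cos(2(n+1)x)$, $\cos(2nx)$, $\sin(2(n+1)x)$ and $\sin(2nx)$ in \eqref{Eq:vetor_n_n+1} should telescope through the relations $2(n+1)x-x=(2n+1)x$ and $2nx+x=(2n+1)x$, yielding
\begin{equation*}
V(p)\cdot e_1(p)=\cos\bigl((2n+1)x\bigr),\qquad V(p)\cdot e_2(p)=\sin\bigl((2n+1)x\bigr).
\end{equation*}

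Consequently $V=\cos\theta\,e_1+\sin\theta\,e_2$ with angle function $\theta(x,y)=(2n+1)x$, so that $d\theta(e_2)=0$ and $d\theta(e_1)=(2n+1)/\cos\alpha$. Comparing with the characterization \eqref{Eq:variacao_constante} of the fields $V_k$ forces $k-1=2n+1$, i.e.\ $k=2(n+1)$, a positive even integer satisfying $k\geq 4$ for every $n\geq 1$; in particular $k\in 2\mathbb{Z}\setminus\{0,2\}$. The computation reproduced right after \eqref{Eq:variacao_constante} then shows that any field verifying \eqref{Eq:variacao_constante} realizes the sharp lower bound $\pi L(\varepsilon_k)$ of Theorem~\ref{Thm:BCGN}; hence (modulo the rigid rotation of $\mathbb{S}^2$ absorbing the additive constant $\pi/2$ in \eqref{Def:angle_function}) the field associated with $\tau_{n,n+1}$ coincides with $V_{2(n+1)}$ and is area-minimizing.

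The main technical point, and in practice the only delicate step, will be the trigonometric collapse above: one must verify that $\sin^2(2y)$, coming from the pairing of the first entry of $V(p)$ with the first entry of $e_2(p)$, combines with $\cos(2y)\bigl(\cos^2 y-\sin^2 y\bigr)=\cos^2(2y)$ arising from the remaining pairings to produce a clean coefficient $1$ in front of $\sin((2n+1)x)$, and analogously that all pairings between the second and third entries of $V(p)$ and $e_1(p)$ reduce to $(\cos^2 y+\sin^2 y)\cos((2n+1)x)=\cos((2n+1)x)$. Once this simplification is in place, the identification $V=V_{2(n+1)}$ and the invocation of Theorem~\ref{Thm:BCGN} are immediate.
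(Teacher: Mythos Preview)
Your proposal is correct and follows essentially the same route as the paper: read off $p(x,y)$ and $V(p)$ from the columns of $M_{n,n+1}$, compute the inner products of $V(p)$ with the parallel/meridian frame, collapse the trigonometry to $\cos((2n+1)x)$ and $\sin((2n+1)x)$, and match against \eqref{Eq:variacao_constante} to obtain $k=2n+2$. The paper carries out the same inner-product calculation (with $u_2=-e_2$ in your notation, whence its angle with the meridian direction is $(2n+1)x+\pi/2$ rather than your $\sin$-component), and the ``delicate step'' you flag---the combination $\sin^2(2y)+\cos(2y)(\cos^2 y-\sin^2 y)=1$---is exactly the simplification the paper performs.
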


\begin{proof} 
A compact minimal Lawson surface $\tau_{n, n+1}$ of Euler characteristic zero in $\mathbb{S}^3(2)$ is given by \eqref{Eq:Lawson_tau_n+1 n}, which corresponds to the matrix $M_{n,n+1}$ in $\mathbf{SO}(3)$ described in \eqref{Matrix:Lawson_n_n+1}. The notation established through the equations \eqref{Eq:ponto} and \eqref{Eq:vetor} provide us 
{\fontsize{9,3}{9,3}
\begin{align*}
    p(x,y) &= \bigg(\cos(2y), \sin(2y)\sin(x), \sin(2y)\cos(x)\bigg), \\ 
    V(p) &= \bigg(\sin(2y) \sin\big((2n+1)x\big), \cos^2(y)\cos\big(2(n+1)x\big)+ \sin^2(y)\cos(2nx), -\cos^2(y)\sin\big(2(n+1)x\big)+ \sin^2(y)\sin(2nx) \bigg). 
\end{align*}}

\noindent
Consider $u_1(p) = (0, \cos(x), - \sin(x) )$ and $u_2(p) = (-\sin(2y), \cos(2y)\sin(x), \cos(2y)\cos(x))$ the unit vector fields at the point $p(x,y)$, where $u_1(p)$ is tangent to the parallel $\cos(2y)$ and $u_2(p)$ tangent to the meridian $y$.  We shall calculate the angle $\theta(p)$ between the unit vector fields $u_i(p)$ and $V(p)$, using the standard Euclidean inner product in $\mathbb{R}^3$, where $i=1,2$. It follows that, for the unit vector fields $u_2(p)$ and $V(p)$ we have
{\fontsize{10}{10}
\begin{align*}
    \cos\big(\theta(p)\big) &= -\sin^2(2y)\sin\big((2n+1)x\big) + \cos^2(y)\cos(2y)\bigg(\sin(x)\cos\big(2(n+1)x\big) - \cos(x)\sin\big(2(n+1)x\big)\bigg) \\
    & \hspace{3cm} + \sin^2(y)\cos(2y)\bigg(\sin(x)\cos(2nx)+\cos(x)\sin(2n x)\bigg)\\
    &= -\sin^2(2y)\sin\big((2n+1)x\big) + \cos^2(y)\cos(2y)\sin\big(x-2(n+1)x\big) 
    + \sin^2(y)\cos(2y)\sin\big(x+2nx)\big)\\
    &= -\sin^2(2y)\sin\big((2n+1)x\big) + \cos(2y)\sin\big((2n+1)x\big)\bigg(\cos^2(y)- \sin^2(y)\bigg)\\
    & = -\sin\big((2n+1)x\big)\bigg(\sin^2(2y)+\cos^2(2y)\bigg)\\
    & = \cos\bigg((2n+1)x + \frac{\pi}{2}\bigg).
\end{align*}}
Thus, the angle between the unit vector fields $u_2(p)$ and $V(p)$ at the point $p$ is given by
\begin{eqnarray*}
    \theta(p) = (2n+1)x + \frac{\pi}{2} +2k_1\pi, \, \, k_1 \in \mathbb{Z}.
 \end{eqnarray*}

\noindent Similarly, the angle between the unit vector fields $u_1(p)$ and $V(p)$ at the point $p$ is given by
\begin{eqnarray*}
    \theta(p) = (2n+1)x + 2k_1\pi, \, \, k_1 \in \mathbb{Z}.
 \end{eqnarray*}
One concludes that
\begin{eqnarray*}
d\theta_p\big(u_1(p)\big) = \frac{2n+1}{\cos(2y)} \hspace{0.4cm}\mbox{and} \hspace{0.4cm} d\theta_p\big(u_2(p)\big) \equiv 0,
\end{eqnarray*}
which means that $V$ makes constant angles with meridians, and the angle $\theta$ with parallel $\cos(2y)$, varies constantly. The equation \eqref{Eq:variacao_constante} implies that 
\begin{eqnarray*}
 k = 2n+2, \,  \, \, n \in \mathbb{Z}.
\end{eqnarray*}
Therefore, the unit vector field $V(p)$ is an area-minimizing vector field $V_k$ in $\mathbb{S}^2\backslash\{N,S\}$, where Poincaré index $k$ around the singularity $N$ or $S$  is a even positive integer, defined in~\eqref{Eq:Vks}.
\end{proof}

\noindent 

The Proposition \ref{Prop:ruled_Par} ensures that for each integer $n \in \mathbb{Z}$, the Lawson surface $\tau_{n,n+1}$ defines only one vector field on $\mathbb{S}^2\backslash\{N,S\}$. Proposition \ref{Prop:ruled_Impar} provides that the indices of these vector fields are even, different from zero and two, which immediately implies Theorem \ref{Thm:Main1}.



Our focus now is on compact minimal Lawson surfaces $\tau_{r,s}$ in $\mathbb{S}^3(2)$ such that $(r,s) = 1$, where $r=2n+1$ and $s=2n+3$, $n \in \mathbb{Z}$. We shall prove that, for each integer $n \in \mathbb{Z}$, a compact minimal Lawson surface $\tau_{2n+1,2n+3}$ corresponds to an area-minimizing unit vector field $V_k$ on $\mathbb{S}^2\backslash\{N,S\}$, as defined in \eqref{Eq:Vks}, where $k$ is an odd integer Poincaré index around the singularity $N$ or $S$.

\begin{proposition}\label{Prop:ruled_Impar} For each integer $n$, the compact minimal Lawson surface $\tau_{2n+1,2n+3}$ in $\mathbb{S}^3(2)$ corresponds to an area-minimizing unit vector field $V_k$ on $\mathbb{S}^2\backslash\{N,S\}$, where $k$ is the Poincaré index around the singularity at $N$ or $S$ and $k$ is an odd integer.
\end{proposition}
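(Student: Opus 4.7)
The plan is to mirror the proof of Proposition \ref{Prop:ruled_Par} with the single adjustment that the longitudinal parameter of $\tau_{2n+1,2n+3}$ traces each parallel of $\mathbb{S}^2$ twice, which will turn the resulting index from even into odd. First I would extract, from the matrix $M_{2n+1,2n+3}\in\mathbf{SO}(3)$ of \eqref{Matrix:Lawson_impares} and the conventions fixed in \eqref{Eq:ponto}--\eqref{Eq:vetor}, the base point and candidate vector
\begin{align*}
p(x,y) &= \bigl(\cos 2y,\;\sin 2y\sin 2x,\;\sin 2y\cos 2x\bigr),\\
V(p) &= \bigl(\sin 2y\sin((4n{+}4)x),\;V_2,\;V_3\bigr),
\end{align*}
where $V_2=\cos^2 y\cos((4n{+}6)x)+\sin^2 y\cos((4n{+}2)x)$ and $V_3=-\cos^2 y\sin((4n{+}6)x)+\sin^2 y\sin((4n{+}2)x)$. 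As $x$ runs over $[0,\pi)$ the map $x\mapsto p(x,y)$ already sweeps the parallel of latitude $\alpha$ (with $\cos\alpha=|\sin 2y|$) once, since $(\sin 2x,\cos 2x)$ winds around $S^1$ as $2x\in[0,2\pi)$.

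Next I would introduce the oriented orthonormal frame at $p$ tangent respectively to the parallel and to the meridian,
\begin{align*}
u_1(p)&=(0,\cos 2x,-\sin 2x),\\
u_2(p)&=(-\sin 2y,\,\cos 2y\sin 2x,\,\cos 2y\cos 2x),
\end{align*}
and compute $\cos\theta(p)=\langle V(p),u_2(p)\rangle$ by the very same product-to-sum manipulation as in the proof of Proposition \ref{Prop:ruled_Par}. Specifically, the combination $\sin 2x\,V_2+\cos 2x\,V_3$ collapses via $\sin(A\pm B)$ to $-\cos 2y\,\sin((4n{+}4)x)$; adding the contribution $-\sin^2 2y\,\sin((4n{+}4)x)$ from the first component yields
\begin{eqnarray*}
\cos\theta(p)=-\sin((4n{+}4)x)=\cos\!\bigl((4n{+}4)x+\tfrac{\pi}{2}\bigr).
\end{eqnarray*}
A parallel calculation gives $\langle V(p),u_1(p)\rangle=\cos((4n{+}4)x)$, so $\theta(p)$ depends only on $x$, confirming constancy of $\theta$ along meridians.

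To close the argument I would translate these into intrinsic derivatives: along one full traversal of the parallel (an $x$-interval of length $\pi$) the angle $\theta$ increases by $(4n{+}4)\pi$, so $V$ winds $2n{+}2$ times relative to the parallel. Equivalently, since $|\partial_x p|=2|\sin 2y|$,
\begin{eqnarray*}
d\theta_p\bigl(u_1(p)\bigr)=\frac{4n+4}{2|\sin 2y|}=\frac{2n+2}{\cos\alpha},\qquad d\theta_p\bigl(u_2(p)\bigr)\equiv 0.
\end{eqnarray*}
Comparison with \eqref{Eq:variacao_constante} forces $k-1=2n+2$, i.e.\ $k=2n+3$, which is odd, and this identifies $V$ with the area-minimizing field $V_k$ of Definition \ref{def:Vks}.

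The principal obstacle is conceptual rather than computational. Unlike the Klein-bottle case handled in Lemma \ref{Lemma:sucessivos} and Proposition \ref{Prop:UnicoVetor}, the immersed torus $\tau_{2n+1,2n+3}$ doubly covers $\mathbb{S}^2\setminus\{N,S\}$ under the Euler transformation $\mathcal{E}$, so before the calculation above can be read as a correspondence with a single field $V_k$ one must establish an analogue of Lemma \ref{Lemma:sucessivos}: the coincidences $p(x_1,y_1)=p(x_2,y_2)$ now split into two classes, one preserving $V$ (and selecting the fundamental domain of one cylindrical component $C^{+}_{rs}$ or $C^{-}_{rs}$) and one sending $V$ to $-V$ (and switching between the two components). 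The calculation above applies on a single component and produces $V_k$; carried out on the other component it produces $-V_k=V_{2-k}$, whose Poincaré index $2-k$ is also odd, in accordance with $I_V(N)+I_V(S)=2$.
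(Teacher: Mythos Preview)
Your proof is correct and follows essentially the same route as the paper's: extract $p$ and $V$ from the columns of $M_{2n+1,2n+3}$, compute the inner products of $V$ with the parallel/meridian frame $u_1,u_2$ to find that the angle depends only on $x$ as $(4n{+}4)x+\tfrac{\pi}{2}$, differentiate, and match with \eqref{Eq:variacao_constante} to obtain $k=2n+3$. Your closing paragraph on the double-cover ambiguity (one sheet giving $V_k$, the other $-V_k$) is not needed for this proposition itself but correctly anticipates what the paper treats separately in Propositions~\ref{Prop:DomainG} and \ref{Prop:Dois_Vetores-impares}.
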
 

\begin{proof} A compact minimal Lawson surface $\tau_{2n+1,2n+3}$ of Euler characteristic zero in $\mathbb{S}^3(2)$ is given by \eqref{Eq:Lawson_tau_2n+1 2n+3}, which corresponds to the matrix $M_{2n+1,2n+3}$ in $\mathbf{SO}(3)$ described in \eqref{Matrix:Lawson_impares}. Let $p$ be a point in $\mathbb{S}^2$ and  let $V(p)$ a unit vector field in $T^1\mathbb{S}^2$ given by 
\begin{eqnarray}\label{Eq:ponto_impares}
    p(x,y) &=& \bigg(\cos(2y), \,  \sin(2y)\sin\big(2x\big), \, \sin(2y)\cos\big(2x\big)\bigg) \in \mathbb{S}^2 ,\\ 
    V(p) &=& \bigg( v_1(p), \,  v_2(p), \, v_3(p) \bigg) \in T^1\mathbb{S}^2, \label{Eq:vetor_impares}
\end{eqnarray}
where \begin{eqnarray*}
    v_1(p) &=& \sin(2y)\sin\big(2(2n+1)x \big), \\
    v_2(p) &=& \cos^2(y)\cos\big(2(2n+1)x)\big) + \sin^2(y)\cos(2(2n+3)x),\\ 
    v_3(p) &=& -\cos^2(y)\sin(2(2n+1)x)+ \sin^2(y)\sin(2(2n+3)x).
\end{eqnarray*}

\noindent Consider a unit vector field  $u_1(p) = (0, \cos(2x), \sin(2x))$ tangent to the parallel $\cos(2y)$ and a unit vector field  $u_2(p) = (-\sin(2y), -\cos(2y)\sin(x), \cos(2y)\cos(x))$ tangent to the meridian $2x$ at the point $p(x,y)$. We shall calculate the angle $\theta(p)$ between the unit vector fields $u_i(p)$ and $V(p)$, using the standard Euclidean inner product in $\mathbb{R}^3$, where $i=1,2$. Taking the meridian $2y = \alpha$ and parallel $2x = \beta$. It follows that, for the unit vector fields $u_1(p)$ and $V(p)$ we get
{\fontsize{10}{10}
\begin{align*}
    \cos\big(\theta(p)\big) &= \cos^2\bigg(\frac{\alpha}{2} \bigg) \bigg( \cos(\beta)\cos\big((2n+1)\beta\big) - \sin(\beta)\sin\big((2n+1)\beta \bigg)   \\
    & \hspace{3cm} + \sin^2\bigg(\frac{\alpha}{2} \bigg) \bigg( \cos(\beta)\cos\big((2n+3)\beta\big) + \sin(\beta)\sin\big((2n+3)\beta \bigg) \\
    & = \cos^2\bigg(\frac{\alpha}{2} \bigg) \cos((2n+2)\beta) + \sin^2\bigg(\frac{\alpha}{2} \bigg)\cos(-(2n+2)\beta)\\
    & = \cos\big((2n+2)\beta\big).
\end{align*}}
Thus, the angle between the unit vector fields $u_1(p)$ and $V(p)$ at the point $p$ is given by
\begin{eqnarray*}
    \theta(p) = (2n+2)\beta + 2k_1\pi, \, \, k_1 \in \mathbb{Z}.
 \end{eqnarray*}

\noindent Similarly, the angle between the unit vector fields $u_2(p)$ and $V(p)$ at the point $p$ is given by
\begin{eqnarray*}
    \theta(p) = (2n+2)\beta +  \frac{\pi}{2} + 2k_1\pi, \, \, k_1 \in \mathbb{Z}.
 \end{eqnarray*}
One concludes that
\begin{eqnarray*}
d\theta_p\big(u_1(p)\big) = \frac{2n+2}{\cos(2\beta)} \hspace{0.4cm}\mbox{and} \hspace{0.4cm} d\theta_p\big(u_2(p)\big) \equiv 0,
\end{eqnarray*}
which means that $V$ makes constant angles with meridians, and the angle $\theta$ with parallel $\cos(2\beta)$, varies constantly.
The equation \eqref{Eq:variacao_constante} provides us 
\begin{eqnarray*}
 k = 2n+3, \,  \, \, n \in \mathbb{Z}.
\end{eqnarray*}
Therefore, the unit vector field $V(p)$ is an area-minimizing vector field $V_k$ in $\mathbb{S}^2\backslash\{N,S\}$ where Poincaré index $k$ around the singularity $N$ or $S$ is a odd positive integer.    
\end{proof}

Unlike the case of the Lawson surface $\tau_{n, n+1}$, we shall prove that, for the Lawson surface $\tau_{r,s}$ where $r=2n+1$ and $s=2n+3$, there is more than one vector field $V(p) \in T^1\mathbb{S}^2$ associated with the point $p$ generated by two distinct pairs of parameters. Our aim is to determine how many of these vector fields exist and to identify each specific vector field.

Consider $D$ a subset of the $2$-dimensional Euclidean space $\mathbb{R}^2$ defined as the square given by $[-\pi,\pi]\times[-\pi,\pi]$, and observe that
\begin{eqnarray*}
    \varphi_{n,m}(D) = \tau_{n,m},
\end{eqnarray*}
where $\varphi_{n,m}$ is the doubly periodic immersion defined in~\eqref{Eq:tau_Lawson_3-sphere(2)} and $m,n$ is a pair of integers with $(m,n)=1$.

\vspace{0.5cm}

\noindent 
Consider the intervals in $\mathbb{R}$ given by
\begin{eqnarray}\label{Eq:SubSquare}
D_1 = \left[0, \dfrac{\pi}{2}\right], \,\, \,D_2 = \left[\dfrac{\pi}{2}, \pi \right], \,\,\, D_3 = \left[-\pi , -\dfrac{\pi}{2}\right]\,\,\, \mbox{and} \,\,\, D_4 = \left[-\dfrac{\pi}{2}, 0\right].    
\end{eqnarray}
Therefore,
\begin{eqnarray}\label{Eq:Square}
    D = \bigcup_{i,j=1}^4 (D_i\times D_j).
\end{eqnarray}

\noindent 
We note that the domain $D$ covers the compact minimal Lawson surface $\tau_{2n+1,2n+3}$ twice, while the Proposition \ref{Prop:DomainG} provides a domain $G$ for the doubly periodic immersion $\varphi_{2n+1,2n+3}$, ensuring that G covers $\tau_{2n+1,2n+3}$ exactly once.



\begin{proposition}\label{Prop:DomainG}
    If $G=[-\pi,\pi]\times\left[-\dfrac{\pi}{2},\dfrac{\pi}{2}\right]$ then $\varphi_{r,s}(G) = \varphi_{r,s}(D)$, where $r=2n+1$ and $s=2 n+3$, $n \in \mathbb{Z}$.
\end{proposition}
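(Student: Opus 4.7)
The plan is to split the claim into two inclusions. The inclusion $\varphi_{r,s}(G)\subseteq\varphi_{r,s}(D)$ is free because $G\subset D$, so all the work lies in the reverse inclusion: any point of $D$ whose $y$-coordinate lies outside $[-\pi/2,\pi/2]$ must be shown to have the same $\varphi_{r,s}$-image as some point of $G$. I will do this by exhibiting a single diagonal symmetry of the immersion together with the $2\pi$-periodicity it already enjoys in each variable.

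The key step is the identity
\begin{equation*}
\varphi_{r,s}(x+\pi,\,y+\pi)=\varphi_{r,s}(x,y)\qquad\text{for all }(x,y)\in\mathbb{R}^{2}.
\end{equation*}
This follows immediately from parity: since $r=2n+1$ and $s=2n+3$ are both odd, each of the four functions $\cos(rx),\sin(rx),\cos(sx),\sin(sx)$ changes sign under $x\mapsto x+\pi$, while $\cos(y)$ and $\sin(y)$ both change sign under $y\mapsto y+\pi$. Consequently every one of the four products $\cos(rx)\cos(y)$, $\sin(rx)\cos(y)$, $\cos(sx)\sin(y)$, $\sin(sx)\sin(y)$ picks up two sign changes and is preserved. (In contrast, this is exactly what fails for the pairs $(n,n+1)$, where the two integers have opposite parities; this matches the different behavior observed earlier for $\tau_{n,n+1}$.)

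From this diagonal symmetry the result is a short case analysis. Given $(x,y)\in D=[-\pi,\pi]^{2}$, if $y\in[\pi/2,\pi]$ apply the shift $(x,y)\mapsto(x-\pi,\,y-\pi)$ to obtain $y':=y-\pi\in[-\pi/2,0]\subset[-\pi/2,\pi/2]$ with the same image; if the resulting first coordinate $x-\pi$ falls outside $[-\pi,\pi]$, apply the $2\pi$-translation in $x$ (which leaves $\varphi_{r,s}$ invariant because $r,s$ are integers) to return it to $[-\pi,\pi]$. The symmetric case $y\in[-\pi,-\pi/2]$ is handled by the shift $(x,y)\mapsto(x+\pi,\,y+\pi)$, again followed by a $2\pi$-translation in $x$ if necessary. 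In every case the new parameter pair lies in $G$, completing the inclusion $\varphi_{r,s}(D)\subseteq\varphi_{r,s}(G)$.

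I do not expect any genuine obstacle here: the main content of the proof is the parity check that gives the diagonal symmetry, and once that is in hand the remainder is bookkeeping with intervals. The only thing to be a bit careful about is not to claim that the restriction $\varphi_{r,s}|_{G}$ is injective (it is not, and indeed the subsequent analysis in the paper counts multiplicities via Lemmata on coincidences of $\varphi_{r,s}$), but only that $G$ still surjects onto $\tau_{r,s}$.
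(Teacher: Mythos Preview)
Your proof is correct and is essentially the same argument as the paper's, only stated more transparently. The paper organizes the computation by subdividing $D$ into the sixteen rectangles $D_i\times D_j$ and asserting, ``by straightforward computation'', that $\varphi_{r,s}(D_i\times D_j)=\varphi_{r,s}(D_i'\times D_j')$ where the primed intervals are the unprimed ones shifted by $\pm\pi$; this is exactly your diagonal symmetry $\varphi_{r,s}(x+\pi,y+\pi)=\varphi_{r,s}(x,y)$ (valid because $r$ and $s$ are both odd), combined with the observation that the shifted intervals permute the original ones, which is your use of $2\pi$-periodicity in $x$. Your version has the advantage of isolating the single identity that drives the argument rather than leaving it inside an unspecified computation, and your parenthetical remark about why the same trick fails for $\tau_{n,n+1}$ is a nice sanity check that the paper does not make explicit.
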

\begin{proof}
Consider the intervals in $\mathbb{R}$ given by
\begin{eqnarray*}
D_1' = D_3, \hspace{0.5cm} D_2' = D_4\hspace{0.5cm} D_3' = D_1, \hspace{0.5cm}\mbox{and}\hspace{0.5cm}\ D_4' = D_2,   
\end{eqnarray*}
where the subset $D_i$ of $D$ is defined in \eqref{Eq:SubSquare}, for all $1 \leq i \leq 4$. By straightforward computation one obtains
\begin{eqnarray*}
   \varphi_{r,s}\left( D_i \times D_j\right) = \varphi_{r,s}\left( D_i' \times D_j'\right)  
\end{eqnarray*}
That implies
\begin{eqnarray*}
    \varphi_{r,s}\left( \bigcup_{i=1}^4 D_i \times D_1 \right) =  \varphi_{r,s} \left(\bigcup_{i=1}^4D_i \times D_3\right)\,\,\, \mbox{and} \,\, \, \,
    \varphi_{r,s}\left( \bigcup_{i=1}^4 D_i \times D_2 \right) =  \varphi_{r,s} \left(\bigcup_{i=1}^4D_i \times D_4\right).\\
\end{eqnarray*}

\noindent
Observe that, $G$ can be written as
\begin{eqnarray*}
    G = \left( \bigcup_{i=1}^4 D_i \times D_1 \right)\bigcup \left(\bigcup_{i=1}^4D_i \times D_4\right).
\end{eqnarray*}
Therefore, $\varphi_{r,s}(G) = \varphi_{r,s}(D)$, where $r=2n+1$ and $s=2 n+3$, $n \in \mathbb{Z}$.
\end{proof}

\begin{proposition}\label{Prop:Dois_Vetores-impares}
    Let $G_1=[-\pi,\pi]\times\left[0,\frac{\pi}{2}\right]$ and $G_2=[-\pi,\pi]\times\left[-\frac{\pi}{2},0\right]$ be the indicated subsets of $G$. Consider $\mathcal{E}: \mathbb{S}^3(2) \rightarrow \mathbf{SO}(3)$ the Euler transformation and $\varphi_{r,s}$ defined in \eqref{Eq:Lawson_tau_2n+1 2n+3}, with $r=2n+1$ and $s=2 n+3$, $n \in \mathbb{Z}$. If $(x_1,y_1)$ and $(x_2,y_2)$ belong to $\overset{o}{G_1}\cup \overset{o}{G_2}$  are such that
\begin{itemize}
    \item [i)] $\varphi_{r,s}(x_1,y_1) = p_1$, $\varphi_{r,s}(x_2,y_2)=p_2$;
    \item [ii)] $\mathcal{E}\big(\varphi_{r,s}(x_1,y_1)\big) = \big(p_1, V(p_1)\big)$ and $\mathcal{E}\big(\varphi_{r,s}(x_2,y_2)\big) = \big(p_2, V(p_2)\big)$.
\end{itemize}
Then,
\begin{eqnarray*}
    V(\varphi_{r,s}(x_1,y_1)) = - V(\varphi_{r,s}(x_2,y_2)).
\end{eqnarray*}
\end{proposition}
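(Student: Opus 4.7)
My plan is to follow the strategy of Lemma~\ref{Lemma:sucessivos}: first classify the pairs $(x_1,y_1),(x_2,y_2)$ in $\overset{o}{G_1}\cup\overset{o}{G_2}$ for which the two basepoints on $\mathbb{S}^2$ agree, then substitute into the explicit formula for the vector component and verify the sign flip.

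Setting $p_1=p_2$ via the first-column formula~\eqref{Eq:ponto_impares} yields the system
\begin{equation*}
\cos(2y_1)=\cos(2y_2),\qquad \sin(2y_1)\sin(2x_1)=\sin(2y_2)\sin(2x_2),\qquad \sin(2y_1)\cos(2x_1)=\sin(2y_2)\cos(2x_2).
\end{equation*}
Since $y_i\in(-\pi/2,0)\cup(0,\pi/2)$, the first equation forces $y_2=\pm y_1$, producing two branches. In the case $y_2=y_1$ (both parameters in the same component $\overset{o}{G_i}$), the remaining two equations imply $x_2\equiv x_1\pmod\pi$, and a direct check analogous to Lemma~\ref{Lemma:sucessivos} shows $V(p_1)=V(p_2)$ there. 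In the case $y_2=-y_1$ (one parameter in $\overset{o}{G_1}$ and the other in $\overset{o}{G_2}$), one obtains $x_2\equiv x_1+\pi/2\pmod\pi$, which is the cross-hemisphere situation that constitutes the actual content of the proposition.

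In this cross-hemisphere case I would substitute $x_2=x_1+\pi/2$ and $y_2=-y_1$ into~\eqref{Eq:vetor_impares} and analyze each component separately. Under $x\mapsto x+\pi/2$ the three trigonometric arguments appearing in $v_1,v_2,v_3$ pick up phase shifts of $(2n+2)\pi$, $(2n+1)\pi$, and $(2n+3)\pi$ respectively; the first is an even multiple of $\pi$, leaving the corresponding $\sin$ and $\cos$ unchanged, while the latter two are odd multiples, negating both $\sin$ and $\cos$. Combined with $\sin(2y_2)=-\sin(2y_1)$, $\cos^2 y_2=\cos^2 y_1$, and $\sin^2 y_2=\sin^2 y_1$, a straightforward substitution yields $v_i(p_2)=-v_i(p_1)$ for each $i\in\{1,2,3\}$, hence $V(p_2)=-V(p_1)$.

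The main obstacle is the parity bookkeeping, which is delicate but decisive: the frequency in $v_1$ is $\equiv 0\pmod 4$ while those in $v_2,v_3$ are $\equiv 2\pmod 4$, so only $v_2$ and $v_3$ receive a sign change from the $x$-shift; meanwhile $\sin(2y)$ appears only as the prefactor of $v_1$, so only $v_1$ gets its sign change from the $y$-reflection. This complementary cancellation, each component flipping via a different mechanism, is the arithmetic coincidence specific to the pair $(r,s)=(2n+1,2n+3)$; for $\tau_{n,n+1}$ the parities realign, and one instead recovers the single-sheet behavior of Proposition~\ref{Prop:UnicoVetor}.
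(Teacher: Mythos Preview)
Your approach matches the paper's: both solve $p_1=p_2$ via~\eqref{Eq:ponto_impares} to pin down $A_2$ in terms of $A_1$, then substitute into~\eqref{Eq:vetor_impares}; the paper records only the cross-hemisphere relation $A_2=\bigl(\tfrac{(2k_1+1)\pi}{2}+x_1,\ k_2\pi-y_1\bigr)$ and leaves the sign comparison as ``straightforward,'' while you carry out the parity bookkeeping explicitly and correctly. Your observation that the same-hemisphere branch $(y_2=y_1,\ x_2\equiv x_1\pmod\pi)$ yields $V(p_1)=V(p_2)$ rather than $-V(p_2)$ is accurate and usefully flags that the proposition is intended to be read with one parameter in $\overset{o}{G_1}$ and the other in $\overset{o}{G_2}$, which the paper tacitly assumes and which is how the result is invoked in the proof of Theorem~\ref{Thm:Main2}.
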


\begin{proof}
Set $A_1 := (x_1,y_1)$ and $A_2: =(x_2, y_2)$ belong to $\overset{o}{G_1}\cup \overset{o}{G_2}$. Since $A_1$ and $A_2$ are such that $\varphi_{r,s}(A_1) = \varphi_{r,s}(A_2)$, the Euler transformation $\mathcal{E}: \mathbb{S}^3(2) \rightarrow \mathbf{SO}(3)$ through the equation \eqref{Eq:ponto_impares} provides us  
\begin{eqnarray} \label{Eq:A_2-impares}
A_2 = \left(\, \dfrac{(2k_1+1)\pi}{2} + x_1,\, \, k_2\pi-x_2\right), \hspace{0.2cm}k_1, k_2 \in \mathbb{Z}. 
\end{eqnarray}    

\noindent 
In order to verify the equality $V(\varphi_{r,s}(x_1,y_1)) = - V(\varphi_{r,s}(x_2,y_2))$, it is sufficient to compare equation \eqref{Eq:vetor_impares} using the parameters $A_1 = (x_1,y_1)$ and $A_2$ obtained in \eqref{Eq:A_2-impares}.
\end{proof}

\begin{proof} \textit{of Theorem \ref{Thm:Main2}}.
    The topological closure of the image of area-minimizing unit vector field $V_k$ on $\mathbb{S}^2\backslash\{N,S\}$ is a minimally immersed Klein bottle in $T^1\mathbb{S}^2$, as long as the Poincaré index $k$ around the singularity $N$ or $S$ is an even positive integer, as mentioned in subsection \ref{GK}. The Proposition \ref{Prop:UnicoVetor} assures us that, via Euler transformation $\mathcal{E}: \mathbb{S}^3(2) \rightarrow \mathbf{SO}(3)$, for each $p \in \mathbb{S}^3(2)$, there is only one vector $V_k(p) \in T^1_p \mathbb{S}^2$. Furthermore, the Proposition \ref{Prop:ruled_Par} provides us a correspondence between Lawson surfaces of the type $\tau_{n,n+1}$ and area-minimizing vector fields $V_k$, where $k \in 2\mathbb{Z}\backslash\{0,2\}$. Therefore, the classes of compact minimal surfaces of Euler characteristic zero in $\mathbb{S}^3(2)$ given by $\tau_{n,n+1}$ are exactly minimally immersion Klein bottles.

\noindent 
 Consider $r= 2n+1$ and $s=2n+3$, $n \in \mathbb{Z}$, such that $(r,s)=1$. There exists a correspondence between Lawson surfaces $\tau_{r,s}$ and area-minimizing vector fields $V_k$, where the Poincaré index $k$ around the singularity $N$ or $S$ is an odd integer, obtained by Proposition \ref{Prop:ruled_Impar}.  From Proposition \ref{Prop:Dois_Vetores-impares}, we have that if there exist two pairs of parameters $(x_1,y_1)$ and $(x_2,y_2)$ such that both are distinct and generate the same point $p \in \mathbb{S}^3(2)$, then via the Euler transformation $\mathcal{E}: \mathbb{S}^3(2) \rightarrow \mathbf{SO}(3)$, we obtain two vectors in $\mathbb{S}^2$, $V_k(p)$ and $-V_k(p)$. Consider the subsets of $\tau_{2n+1,2n+3}$ given by
\begin{eqnarray}\label{Eq:Lawson_Cylinders}
C_{rs}^+ = \varphi_{r,s}\left([-\pi , \pi]\times \left[0, \dfrac{\pi}{2}\right]\right)\hspace{0.2cm} \mbox{and} \hspace{0.2cm} C_{rs}^- = \varphi_{r,s}\left( \left[-\pi , \pi\right]\times \left[-\dfrac{\pi}{2},0\right]\right).    
\end{eqnarray}
Observe that $C_{rs}^+$ and $C_{rs}^-$ are cylindrical components of $\tau_{r,s}$, meaning that $C_{rs}^+\cup C_{rs}^- = \tau_{r,s}$. Through the Euler transformation $\mathcal{E}: \mathbb{S}^3(2) \rightarrow \mathbf{SO}(3)$, there is a correspondence between $C_{rs}^+$ and $C_{rs}^-$ with the area-minimizing vector fields $V_k$ and $-V_k$, respectively. 

\end{proof}


\section*{References}

\begin{itemize}
    \item[\textbf{[1]}] Vincent Borrelli and Olga Gil-Medrano. Area-minimizing vector fields on round 2-spheres. \textit{J. Reine Angew. Math.}, 640:85–99, 2010.

    \item[\textbf{[2]}] Fabiano G. B. Brito, Jackeline Conrado, Icaro Gonçalves, Adriana V. Nicoli, and Giovanni Nunes. Minimally Immersed Klein Bottles in the Unit Tangent Bundle of the Unit 2-Sphere Arising from Area-Minimizing Unit Vector Fields on \( S^2 \setminus \{N,S\} \). \textit{J. Geom. Anal.}, 33(5):142, 2023.

    \item[\textbf{[3]}] Fabiano G. B. Brito, Jackeline Conrado, Icaro Gonçalves, and Adriana V. Nicoli. Area minimizing unit vector fields on antipodally punctured unit 2-sphere. \textit{Comptes Rendus Mathématique}, 359-10:1225–1232, 2021.

    \item[\textbf{[4]}] Jackeline Conrado. Minimally immersed surfaces in the unit tangent bundle of the 2-sphere arising from area-minimizing unit vector fields on \( S^2 \setminus \{N,S\} \), \textit{PhD thesis of the University of São Paulo. PhD thesis of University of São Paulo}, 2022.

    \item[\textbf{[5]}] Olga Gil-Medrano and Elisa Llinares-Fuster. Minimal unit vector fields. \textit{Tohoku Math. J. (2)}, 54(1):71–84, 2002.

    \item[\textbf{[6]}] Herman Gluck and Wolfgang Ziller. On the volume of a unit vector field on the three-sphere. \textit{Comment. Math. Helv.}, 61(2):177–192, 1986.

    \item[\textbf{[7]}] David L. Johnson. Volumes of flows. \textit{Proc. Amer. Math. Soc.}, 104(3):923–931, 1988.

    \item[\textbf{[8]}] Wilhelm Klingenberg and Shigeo Sasaki. On the tangent sphere bundle of a 2-sphere. \textit{Tohoku Math. J. (2)}, 27:49–56, 1975.

    \item[\textbf{[9]}] H. Blaine Lawson, Jr. Complete minimal surfaces in \( S^3 \). \textit{Ann. of Math. (2)}, 92:335–374, 1970.

    \item[\textbf{[10]}] Antonio Ros. The willmore conjecture in the real projective space. \textit{Mathematical Research Letters}, 6, 05 2000.
\end{itemize}

\end{document}